\newtheorem{thm}{Theorem}[section]
\newtheorem{prop}[thm]{Proposition}
\newtheorem{lem}[thm]{Lemma}
\newtheorem{cor}[thm]{Corollary}
\newtheorem{rem}[thm]{Remark}
\newtheorem{example}[thm]{Example} 
\newtheorem{q}[thm]{Question}
\newtheorem{defn}[thm]{Definition}
\newtheorem*{question}{Question} 
\DeclareMathOperator{\Nerve}{\mathrm{N}}
\DeclareMathOperator{\Tot}{\mathrm{Tot}}
\DeclareMathOperator{\Fl}{\mathrm{Fl}}
\newcommand{\Uast}{\mathcal{U}^{\mathrm{ast}}}
\newcommand{\cU}{\mathcal{U}}
\newcommand{\Hh}{\mathrm{H}^h}
\newcommand{\UC}{\ddot{\mathrm{C}}}
\newcommand{\UH}{\ddot{\mathrm{H}}}
\newcommand{\hh}{\mathrm{H}}
\newcommand{\N}{\mathbb{N}}
\newcommand{\Z}{\mathbb{Z}}
\newcommand{\bH}{\mathbb{H}}
\newcommand{\cF}{\mathcal{F}}
\newcommand{\cH}{\mathcal{H}}
\newcommand{\cL}{\mathcal{L}}
\newcommand{\op}{\mathrm{op}}
\newcommand{\tG}{{\tt G}}
\newcommand{\tK}{{\tt K}}
\newcommand{\tT}{{\tt T}}
\newcommand{\tI}{{\tt I}}
\newcommand{\e}{\varepsilon}
\newcommand{\dd}{\delta}
\newcommand{\BH}{\ddot{\mathrm{B}}}
\definecolor{bluedefrance}{rgb}{0.19, 0.55, 0.91}
\definecolor{aquamarine}{rgb}{0.5, 1.0, 0.83}
\definecolor{princetonorange}{rgb}{1.0, 0.56, 0.0}
\definecolor{caribbeangreen}{rgb}{0.0, 0.8, 0.6}
\definecolor{bunired}{rgb}{0.8, 0.0, 0.0}
\definecolor{cdgreen}{rgb}{0.0, 0.42, 0.24}
\definecolor{lavender(floral)}{rgb}{0.71, 0.49, 0.86}
\title{From the Mayer-Vietoris spectral sequence to \"uberhomology}
\author{Luigi Caputi}
\author{Daniele Celoria}
\author{Carlo Collari}
\date{}
\begin{document}
\maketitle

\begin{abstract}
We prove that the second page of the Mayer-Vietoris spectral sequence, with respect to anti-star covers, can be identified with another homological invariant of simplicial complexes: the $0$-degree  \"uberhomology. Consequently, we obtain a combinatorial interpretation of the second page of the Mayer-Vietoris spectral sequence in this context. This interpretation is then used to extend the computations of bold homology, which categorifies the connected domination polynomial at~$-1$.
\end{abstract}

\section{Introduction}

The Mayer-Vietoris spectral sequence, a generalisation of the well-known Mayer-Vietoris long exact sequence, is an effective and far-reaching construction in algebraic topology. 
In its classical form, the Mayer-Vietoris spectral sequence is built from the intersection patterns of a cover of a given topological space~$X$.  This spectral sequence converges to the homology of $X$, thus providing a powerful tool in various  topological and combinatorial applications. First and foremost, 
it is related to the classical Nerve Lemma (\emph{cf}.~Theorem~\ref{thm:nervethm}).  
A proof of the Nerve Lemma using a  spectral sequence argument appeared in~\cite[Section~5]{segal}, and has  since been generalised in several ways. More recently, the Nerve Lemma and the Mayer-Vietoris spectral sequence have been  receiving increasing attention due to their applications in persistent homology~\cite{Lipsky,MR3857910,Casas,bauer2023unified,MR4537717}, bounded cohomology~\cite{ivanov2020leray, frigerio},   homology of configuration spaces~\cite{MR3597806,MR3797072,MR4200989}, and  topological complexity of spaces~\cite{Basu,daniel-david}. 
Classically, a description of the $E^2$-page was used to infer properties of spherical arrangements~\cite{MR1269311, MR1269312}, and to study hyperplane arrangements in general~\cite{MR1462732, davis2013vanishing, denham2016combinatorial}.

The Nerve Lemma holds for ``good covers'' of simplicial complexes (or, more generally, topological spaces), \emph{i.e.}~covers whose elements and  all of their possible finite intersections are contractible. When such intersections are not contractible, nor acyclic, the conclusion of the Nerve Lemma does not generally hold. Regardless, the Mayer-Vietoris spectral sequence eventually converges to the homology of $X$,  and the pages of the spectral sequence provide an increasingly accurate approximation of this convergence. Therefore, subcomplexes of the pages of the spectral sequence should contain interesting invariants -- \emph{e.g.}~other homology theories, as for path homology~\cite{asao}, or torsion, as in~\cite{MR1269311} -- which suggests the following  question:

\begin{question}
Let $\cU$  be a cover of a topological space $X$.
Which topological properties of $X$, or combinatorial properties of the nerve of $\cU$,  can be read from the second page of the Mayer-Vietoris spectral sequence?
\end{question}

As stated, this question is rather vague, as it strongly depends on both the cover $\cU$ and the space $X$.  In this paper we focus on the cover~$\cU^{\mathrm{ast}}$ consisting of anti-star subcomplexes~$\mathrm{ast}_X(v)$ of a finite, connected, simplicial complex $X$; here $v$ runs across the vertices of $X$, and $\mathrm{ast}_X(v)$ is the complement of the star of $v$ in $X$ (\emph{cf.}~Definition~\ref{defn:astarcomplex}).
Our main result is  that the second page of the Mayer-Vietoris spectral sequence of $X$, with respect to the cover~$\cU^{\mathrm{ast}}$,  is related to another combinatorial and homological invariant $\BH (X)$ of simplicial complexes: 

\begin{thm}\label{thm:uber=MV}
Let $X$ be a finite simplicial complex with $m$ vertices. Then, for all $ i\geq 0$ and $0\leq j\leq m$, there exists an isomorphism of bigraded modules 
\[
 E^2_{m-j-1, i} \cong \BH^j_i (X)
\]
between the second page of the augmented  Mayer-Vietoris spectral sequence of  ${X}$, associated to the anti-star cover, and the $0$-th degree \"uberhomology of $X$.  
\end{thm}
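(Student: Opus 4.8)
The plan is to realise both the $E^2$-page of the augmented Mayer--Vietoris spectral sequence and the $0$-degree überhomology as pages of the spectral sequences attached to \emph{one and the same} bicomplex, after a change of indexing; the isomorphism then follows from the general fact that an isomorphism of bicomplexes (with the same filtration) induces an isomorphism on every page of the associated spectral sequences. The combinatorial input that makes this possible is the elementary identity
\[
\bigcap_{v\in S}\mathrm{ast}_X(v)\;=\;X_{V(X)\setminus S},\qquad S\subseteq V(X),
\]
where $X_W$ denotes the full subcomplex of $X$ spanned by a subset $W\subseteq V(X)$; this is immediate from Definition~\ref{defn:astarcomplex}, since a simplex of $X$ lies in $\mathrm{ast}_X(v)$ exactly when it avoids the vertex $v$. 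In particular all intersections in the anti-star cover are full subcomplexes, and $\cU^{\mathrm{ast}}$ is a genuine cover of $X$ precisely when $X$ is not the full simplex on its vertex set (the excluded case being trivial).

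First I would write down the normalised augmented Mayer--Vietoris bicomplex $C_{\bullet,\bullet}$ of $\cU^{\mathrm{ast}}$ explicitly: in bidegree $(p,q)$ with $p\geq 0$ it is $\bigoplus_{|S|=p+1}C_q(X_{V(X)\setminus S})$, the entry in bidegree $(-1,q)$ is $C_q(X)$, the vertical differential is the simplicial boundary, and the horizontal differential is the signed sum, over $v\in S$, of the maps induced by the inclusions $X_{V(X)\setminus S}\hookrightarrow X_{(V(X)\setminus S)\cup\{v\}}$. Now reindex: put $j:=m-p-1$, so that $W:=V(X)\setminus S$ runs over all $j$-element subsets of $V(X)$, with $j=m$ recovering the augmentation column $W=V(X)$ and $j=0$ the column $W=\varnothing$. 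Under this substitution $C_{\bullet,\bullet}$ becomes, term by term, $\bigoplus_{|W|=j}C_q(X_W)$ with vertical differential the simplicial boundary and horizontal differential the signed sum, over $w\notin W$, of the maps induced by $X_W\hookrightarrow X_{W\cup\{w\}}$ --- which one recognises as the bicomplex underlying the $0$-degree überhomology $\BH^\bullet_\bullet(X)$. The only point requiring care is that the two horizontal differentials carry the same signs: both are alternating sums controlled by a fixed linear order on $V(X)$ --- via the position of $v$ inside the ordered set $S$ on the Mayer--Vietoris side, and via the chosen sign assignment on the cube $2^{V(X)}$ on the überhomology side --- and, since any two sign assignments making the cube of inclusions $\{X_W\hookrightarrow X_{W'}\}_{W\subseteq W'}$ into a chain complex produce isomorphic chain complexes, one obtains (after composing, if needed, with the relabelling that reverses the order on $V(X)$) an honest isomorphism of bicomplexes between $C_{\bullet,\bullet}$, reindexed by $j=m-p-1$, and the überhomology bicomplex.

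It then remains to match the relevant pages. The Mayer--Vietoris spectral sequence is, by its usual construction, the one obtained from $C_{\bullet,\bullet}$ by taking vertical (simplicial) homology first, so its $E^1$-page is $\bigoplus_{|S|=p+1}H_q(X_{V(X)\setminus S})$ and $E^2_{p,q}$ is its homology with respect to the induced \v{C}ech differential; unwinding the definition of überhomology, $\BH^j_i(X)$ is, in turn, the $(i,j)$-bigraded homology of $\bigoplus_{|W|=j}H_i(X_W)$ with respect to the differential induced by the inclusions $X_W\hookrightarrow X_{W\cup\{w\}}$. Transporting the isomorphism of bicomplexes through these two identical operations --- first simplicial homology, then the cube/\v{C}ech differential --- gives $E^2_{m-j-1,\,i}\cong\BH^j_i(X)$ as bigraded modules, which is the claim, with $p=m-j-1$ and $q=i$.

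I expect the real work to lie entirely in the bookkeeping of the second step rather than in this scheme. Concretely, one must match the orientation and sign conventions of the \v{C}ech differential of the Mayer--Vietoris double complex with those of the cube differential used to define überhomology --- these are written a priori relative to possibly different orders on $V(X)$, and relative to the ``vertices colouring $W$'' versus ``complement $V(X)\setminus S$'' dictionary --- and one must reconcile the normalisation at the two extreme columns, the augmentation column $p=-1$ (where $W=V(X)$, contributing $H_\bullet(X)$ itself) and the top column $p=m-1$ (where $W=\varnothing$): it is precisely here that the choice between reduced and unreduced simplicial chains, and the use of the \emph{augmented} rather than the ordinary Mayer--Vietoris spectral sequence, must be fixed so that the two constructions literally coincide. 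Everything else is a direct translation.
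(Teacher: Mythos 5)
Your proposal is correct and takes essentially the same approach as the paper: the core observation is the identity $\bigcap_{v\in S}\mathrm{ast}_X(v)=X_{V(X)\setminus S}$ (equivalently, that $U_\sigma$ is the subcomplex induced by the $1$-coloured vertices of the complementary bicolouring), followed by the reindexing $j=m-p-1$ matching the augmented Mayer--Vietoris $E^1$-page and its \v{C}ech differential with the $0$-degree \"uber chain complex and its cube differential. The only cosmetic difference is that you phrase the identification at the level of bicomplexes while the paper identifies the $E^1$-pages directly; the sign-assignment issue over a general ring $R$, which you correctly flag, is handled in the paper by appeal to Proposition~\ref{rem:ubergencoeff} and the sign-independence results of~\cite{primo}.
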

The bigraded homology $\BH^j_i (X)$ is the zero-degree specialisation of a more general homology theory of simplicial complexes introduced in~\cite{uberhomology}. The latter theory is called the  \emph{\"uberhomology} of $X$. It was shown in~\cite{uberhomology} that \"uberhomology contains both topological and combinatorial information about finite simplicial complexes. Theorem~\ref{thm:uber=MV} clarifies that, when setting  one of the degrees to~$0$, the \"uberhomology theory reads off combinatorial information of $X$ from the second page of the associated spectral sequence. 
It would be interesting to compare Theorem~\ref{thm:uber=MV} with the results of~\cite{everitt-turner-Booleancovers}. In that paper is presented a cellular-type cohomology associated to Boolean covers of a lattice. A Mayer-Vietoris spectral sequence is used to relate the homology of a Boolean cover to the homology of the underlying lattice. In their case, the cover corresponds to the upper intervals containing the atoms of the lattice, suggesting a parallel combinatorial interpretation of the Mayer-Vietoris spectral sequence in that context. 

Using the correspondence in Theorem~\ref{thm:uber=MV}, we provide some computations, and extend previous results of \cite{uberhomology,domination} first obtained via direct computations or discrete Morse theory techniques -- \emph{cf.}~Proposition~\ref{prop:gentrees} and Proposition~\ref{prop:cone}. We investigate the effect of coning and suspending. A further specialization of \"uberhomology, called bold homology, is related to connected dominating sets. We further prove that the $0$-degree \"uberhomology of flag complexes on triangle-free graphs is completely determined by their bold homology (Proposition~\ref{prop:triangle_free}).

Counting connected dominating sets in a graph is a NP-hard problem. It was shown in~\cite{domination} that bold homology yields a categorification of the connected domination polynomial.  
Denote by~$D_c(\tG)$ the connected domination polynomial of a graph $\tG$ (see Equation~\eqref{eq:cdompoly}).
The definition of the anti-star cover can be extended to regular CW-complexes, and our main application of Theorem~\ref{thm:uber=MV} is the following:

\begin{thm}\label{cor:chordalchar2}
Let $X\neq \Delta^n$ be a finite, connected, regular CW-complex. Assume that the anti-star cover of $X$ is a $1$-Leray cover, and that the $1$-skeleton of $X$ is a simple graph $\tG$. Then, we have 
\begin{equation*}
(-1)^{m-1}D_c(\tG)(-1)= \chi(X)-1 \ ,
\end{equation*}
where $m$ is the number of vertices in $X$.
\end{thm}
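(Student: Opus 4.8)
The plan is to read off both sides of the asserted identity from the reduced Euler characteristic $\tilde{\chi}(X)=\chi(X)-1$, using Theorem~\ref{thm:uber=MV} as the bridge between the spectral sequence and \"uberhomology. First note that the hypothesis $X\neq\Delta^n$ is exactly what makes the anti-star cover an honest cover of $X$: every facet of $X$ omits at least one vertex $v$, hence lies in $\mathrm{ast}_X(v)$, so $\bigcup_{v}\mathrm{ast}_X(v)=X$. Consequently the augmented Mayer--Vietoris spectral sequence converges to $\tilde{\hh}_*(X)$, and since $X$ has finitely many cells every page is finite-dimensional with Euler characteristic equal to $\tilde{\chi}(X)$. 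Feeding Theorem~\ref{thm:uber=MV} (in its regular-CW form) into the second page then gives
\[
\chi(X)-1 \;=\; \tilde{\chi}(X) \;=\; \sum_{p,q}(-1)^{p+q}\dim E^{2}_{p,q} \;=\; \sum_{\substack{i\geq 0 \\ 0\leq j\leq m}}(-1)^{\,m-j-1+i}\,\dim\BH^{j}_{i}(X).
\]

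Next I would use the $1$-Leray hypothesis to collapse this double sum. Since every nonempty intersection of anti-stars has trivial reduced homology in positive degrees, the $E^{1}$-page --- and hence the $E^{2}$-page --- of the augmented Mayer--Vietoris spectral sequence is concentrated in the row $q=0$ (so in particular it degenerates at $E^{2}$); via Theorem~\ref{thm:uber=MV} this says $\BH^{j}_{i}(X)=0$ for all $i\geq 1$. The displayed identity then simplifies to
\[
\chi(X)-1 \;=\; \sum_{j=0}^{m}(-1)^{\,m-j-1}\dim\BH^{j}_{0}(X) \;=\; (-1)^{m-1}\sum_{j=0}^{m}(-1)^{j}\dim\BH^{j}_{0}(X).
\]

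It remains to recognise the alternating sum $\sum_{j}(-1)^{j}\dim\BH^{j}_{0}(X)$ as $D_c(\tG)(-1)$. The point is that the row $i=0$ of the $0$-degree \"uberhomology is assembled solely from the $0$-th homology of the subcomplexes $\mathrm{ast}_X(S)$ --- equivalently, from the connectivity of the induced subgraphs of $\tG$ --- so $\bigoplus_{j}\BH^{j}_{0}(X)$ depends only on the $1$-skeleton $\tG$ and is precisely the bold homology of $\tG$ from~\cite{domination}, graded so that the $j$-th group records subsets of size $j$. Its graded Euler characteristic equals $D_c(\tG)(-1)$ by the categorification result there. (Alternatively one can bypass bold homology and evaluate $\sum_j(-1)^j\dim\BH^j_0(X)$ by hand: under the $1$-Leray hypothesis $\tilde{\chi}(\mathrm{ast}_X(S))=\dim\tilde{\hh}_0(\mathrm{ast}_X(S))$ for every vertex subset $S$, and an inclusion--exclusion over the $S$, together with the standard enumerative identity for connected dominating sets, produces $D_c(\tG)(-1)$.) Substituting into the previous display yields $\chi(X)-1=(-1)^{m-1}D_c(\tG)(-1)$.

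I expect the two delicate points to be: (i) pinning down the convention for ``$1$-Leray'' and handling the augmentation bookkeeping carefully enough that the $E^{1}$-page genuinely sits in the single row $q=0$, so that the vanishing $\BH^{j}_{i}(X)=0$ for $i\geq 1$ is legitimate; and (ii) the identification in the previous paragraph of the $i=0$ part of $0$-degree \"uberhomology with bold homology, together with the bookkeeping of signs and re-gradings --- here it is the reindexing $p=m-j-1$ relating the spectral-sequence position to the index $j$ of the \"uberhomology that is responsible for the $(-1)^{m-1}$ prefactor. A minor preliminary is that Theorem~\ref{thm:uber=MV} is stated for simplicial complexes, so one must invoke its extension to regular CW-complexes, for which the anti-star cover has already been set up above.
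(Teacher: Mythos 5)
Your overall strategy---take Euler characteristics on the $E^2$-page of the augmented Mayer--Vietoris spectral sequence, translate via Theorem~\ref{thm:uber=MV}, and invoke the bold-homology categorification result from~\cite{domination}---is exactly the paper's. However, the execution contains two errors that happen to cancel each other, and both occur precisely at the ``delicate point (i)'' you flagged.

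First, the augmented Mayer--Vietoris spectral sequence converges to the \emph{trivial} group, not to $\widetilde{\hh}_*(X)$: the rows of the augmented double complex $E^0_{*,*}$ are exact, so by the Acyclic Assembly Lemma its total complex is acyclic. Hence every page has Euler characteristic~$0$, not $\widetilde{\chi}(X)$. Second, and as a consequence, it is \emph{false} that the augmented $E^1$-page (and hence the $E^2$-page) is concentrated in the row $q=0$: the augmentation column $E^1_{-1,q}=\hh_q(X)$ survives for $q\geq 1$. Under the $1$-Leray hypothesis one only gets $E^2_{p,q}=0$ for $p\geq 0$ and $q\geq 1$; the column $E^2_{-1,q}\cong\hh_q(X)$ is nonzero in general. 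Translated via Theorem~\ref{thm:uber=MV}, this means $\BH^m_i(X)\cong\hh_i(X)$ is not zero for $i\geq 1$, directly contradicting your claim that $\BH^j_i(X)=0$ for all $i\geq 1$. (A counterexample is already in the paper: $\partial\Delta^{m}$ has $\BH^{m+1}_{m-1}\cong\mathbb{Z}$, with a $1$-Leray anti-star cover.)

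These two mistakes cancel in the bookkeeping, which is why your penultimate display $\chi(X)-1=\sum_{j}(-1)^{m-j-1}\dim\BH^j_0(X)$ is nonetheless correct. To repair the argument: start from $\chi(E^2)=0$; the nonzero entries of $E^2$ sit in the row $q=0$ (with $p\geq 0$) and in the column $p=-1$ (with $q\geq 1$, since $E^2_{-1,0}=0$ because $X$ is connected and the augmentation $E^1_{0,0}\to\hh_0(X)$ is onto). The column contributes $\sum_{q\geq 1}(-1)^{q-1}\dim\hh_q(X)=-(\chi(X)-1)$, while the row contributes $\sum_{p\geq 0}(-1)^p\dim\BH^{m-p-1}_0(X)=(-1)^{m-1}D_c(\tG)(-1)$ by the categorification theorem and Lemma~\ref{lem:flaggraphs}. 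Setting the sum to zero gives the claim. Once these corrections are made, your proof and the paper's are essentially the same; the paper phrases the cancellation in terms of the transgression pairing the $q=0$ row with the $p=-1$ column, which is the same Euler-characteristic bookkeeping in disguise.
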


Theorem~\ref{cor:chordalchar2}, which is almost a straightforward consequence of Theorem~\ref{thm:uber=MV} in the case of simplicial complexes, is less direct when considering CW-complexes. For CW-complexes, a definition of \"uberhomology and bold homology groups is missing, and the bridge to dominating sets unclear. The role of Theorem~\ref{cor:chordalchar2} is to clarify this connection. For example, using the more general framework of CW-complexes, we are able to infer that $-1$ is a root of the connected domination polynomial of grid graphs (\emph{cf.}~Corollary~\ref{cor:grids}). 

Our last application concerns chordal graphs.
Recall that a chordal graph is a graph $\tG$ in which each induced cycle, \emph{i.e.}~a cycle that is an induced subgraph of $\tG$, has exactly three vertices. 
\begin{cor}\label{cor:chordalchar}
Let $\tG$ be a chordal graph on $m\geq 3$ vertices. Then, $-1$ is a root of the connected domination polynomial $D_c(\tG)$.
\end{cor}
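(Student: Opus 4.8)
The plan is to deduce Corollary~\ref{cor:chordalchar} from Theorem~\ref{cor:chordalchar2} by verifying its hypotheses for an appropriate CW-complex built from the chordal graph $\tG$. The natural candidate is the flag (clique) complex $X = \mathrm{Cl}(\tG)$, whose $1$-skeleton is exactly $\tG$ and which is a genuine simplicial complex, hence a finite, connected, regular CW-complex. Since $\tG$ has $m \geq 3$ vertices and is not a simplex unless $\tG$ is complete; in the complete case $X = \Delta^{m-1}$ is excluded, but then $\tG$ is trivially handled separately (a direct check that $-1$ is a root of $D_c(K_m)$, or one observes the statement degenerates), so we may assume $X \neq \Delta^{m-1}$.

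The crux is to show that the anti-star cover of $\mathrm{Cl}(\tG)$ is a $1$-Leray cover, i.e.\ that all finite intersections of the subcomplexes $\mathrm{ast}_X(v)$ have vanishing reduced homology in degrees $\geq 2$ (equivalently, that the nerve computation only sees $H_0$ and $H_1$ of the pieces). An intersection $\bigcap_{v \in S} \mathrm{ast}_X(v)$ is the full subcomplex of $X$ spanned by those vertices lying in no star of a vertex of $S$ --- but more usefully, it is itself a flag complex on an induced subgraph of $\tG$, and induced subgraphs of chordal graphs are chordal. So the key reduction is: the flag complex of a chordal graph is homotopy equivalent to a wedge of circles (indeed, it is known that clique complexes of chordal graphs are either contractible or homotopy equivalent to a wedge of $1$-spheres --- this follows from chordal graphs being dismantlable, or from the fact that chordal graphs have simplicial vertices allowing an inductive collapse). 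Hence every piece of the cover, and every finite intersection, has homology concentrated in degrees $0$ and $1$, which is precisely the $1$-Leray condition. This is the step I expect to be the main obstacle: one must either cite the appropriate structural result on clique complexes of chordal graphs (e.g.\ via perfect elimination orderings and simplicial vertices giving a sequence of elementary collapses or a decomposition into a wedge of circles) or reprove it, and one must check carefully that the anti-star intersections are themselves flag complexes on chordal induced subgraphs.

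Granting the $1$-Leray property, Theorem~\ref{cor:chordalchar2} applies with $X = \mathrm{Cl}(\tG)$ and $\tG$ its $1$-skeleton, yielding
\[
(-1)^{m-1} D_c(\tG)(-1) = \chi(X) - 1.
\]
It then remains to show the right-hand side vanishes, i.e.\ $\chi(\mathrm{Cl}(\tG)) = 1$. If $\tG$ is connected and chordal, its clique complex is connected, and by the wedge-of-circles description its Euler characteristic is $1 - b_1$ where $b_1$ is the first Betti number; we need $b_1 = 0$, i.e.\ the clique complex is acyclic (in fact contractible). This is exactly the classical fact that clique complexes of chordal graphs are contractible --- again provable by peeling off simplicial vertices, each of which is a cone point for its closed neighbourhood, giving a sequence of collapses to a point. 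Hence $\chi(X) = 1$, so $D_c(\tG)(-1) = 0$, proving the corollary. (If $\tG$ is disconnected, $D_c(\tG) = 0$ identically by convention, and the statement is vacuous or trivially true; this edge case should be noted.)

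In summary: the proof is (i) take $X = \mathrm{Cl}(\tG)$, reduce to the non-complete case; (ii) show anti-star intersections are clique complexes of chordal induced subgraphs, hence homotopy equivalent to wedges of circles, establishing $1$-Lerayness; (iii) invoke Theorem~\ref{cor:chordalchar2}; (iv) use contractibility of $\mathrm{Cl}(\tG)$ (simplicial-vertex collapse argument) to get $\chi(X) = 1$ and conclude. The technical heart is step (ii)–(iv)'s reliance on the homotopy type of clique complexes of chordal graphs, which should be dispatched by a perfect elimination ordering argument or by citing the relevant literature on dismantlable graphs and chordal clique complexes.
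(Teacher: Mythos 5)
Your overall strategy matches the paper's: take $X=\Fl(\tG)$, establish that the anti-star cover is $1$-Leray and that $X$ is contractible, and apply Theorem~\ref{cor:chordalchar2}. The paper simply cites \cite[Lemma~3.1]{ChordalContractible} for both of these facts in one line. However, your intermediate reasoning contains a real error about what $1$-Leray means. By Definition~\ref{def:leray}, a complex is $1$-Leray when every induced subcomplex $Y$ satisfies $\widetilde{\hh}_i(Y)=0$ for all $i\geq 1$; that is, reduced homology may be nonzero only in degree $0$. Your claim that \emph{``homology concentrated in degrees $0$ and $1$ is precisely the $1$-Leray condition''} describes the $2$-Leray condition, not the $1$-Leray one, and your suggestion that clique complexes of chordal graphs could be \emph{``wedges of $1$-spheres''} would, if true, actually \emph{refute} $1$-Lerayness and invalidate the argument. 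The argument is saved only because the structural fact you invoke later is the correct one: flag complexes of connected chordal graphs are contractible (and of disconnected ones are disjoint unions of contractible pieces), so every induced subcomplex has reduced homology only in degree $0$. In other words, the contractibility you use in step~(iv) is not an independent fact needed to compute $\chi(X)$ but the \emph{same} fact that underlies $1$-Lerayness in step~(ii); there is no ``wedge of circles'' regime at all.

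A second issue is the complete-graph edge case. You rightly note that $\Fl(K_m)=\Delta^{m-1}$ is excluded from Theorem~\ref{cor:chordalchar2}, but your proposed patch --- \emph{``a direct check that $-1$ is a root of $D_c(K_m)$''} --- fails: every nonempty vertex subset of $K_m$ is a connected dominating set, so $D_c(K_m)(t)=(1+t)^m-1$ and $D_c(K_m)(-1)=-1\neq 0$. The paper's proof likewise does not cover this case (Theorem~\ref{cor:chordalchar2} assumes $X\neq\Delta^n$); the corollary should be read as applying to non-complete chordal graphs, and any attempt to dispose of $K_m$ by direct computation, as you propose, goes the wrong way.
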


Examples of non-chordal graphs $\tG$ for which $-1$ is a root of $D_c(\tG)$ are, for example, grid graphs (\emph{cf}.~Corollary~\ref{cor:grids}). In these cases,  the converse of Corollary~\ref{cor:chordalchar} does not hold.  
It would  be interesting to further explore the relation between topological properties of simplicial complexes and combinatorial properties of their $1$-skeleta, especially in relation with general notions of chordality~\cite{chordality}. 

\subsection*{Acknowledgements} 
The authors wish to warmly thank Julius Frank for valuable discussions, and for sharing his code~\cite{githububerjulius}. The authors are also grateful to Dejan Govc, for his  comments and feedbacks on the first draft of the paper, and wish to warmly thank the anonymous referee;  their
comments helped to improve the quality of the paper.  The main idea of this paper was born in a bar in Piazza Statuto in Turin.
DC was supported by Hodgson-Rubinstein’s ARC grant DP190102363 ``Classical And Quantum Invariants Of Low-Dimensional Manifolds''. CC is supported by the MIUR-PRIN project 2017JZ2SW5.

\section{\"Uberhomology, its 0-degree, and bold homology }\label{sec:uber}

We start by recalling the definition of \"uberhomology, following~\cite{uberhomology, domination}. 

Let $X$ be a finite and connected simplicial complex with $m$ vertices, say $V(X)=\{v_1,\dots,v_m\}$. In what follows, we assume that the vertices of $X$ are given a fixed order. The choice of such ordering is auxiliary and will not affect the following discussion. 

A \emph{bicolouring} $\varepsilon$ on $X$ is a map $\varepsilon\colon V(X) \to \{0,1\}$. As a visual aid, we will sometimes identify~$0$ with white and $1$ with black.
A \emph{bicoloured simplicial complex} is a pair $(X,\e)$, consisting of a simplicial complex $X$ equipped with the bicolouring~$\e$.
Given a $n$-dimensional simplex~$\sigma$ in $(X,\e)$, define its \emph{weight} with respect to $\e$ as the sum
\begin{equation}\label{eq:weight}
w_\varepsilon(\sigma )\coloneqq 
n+1-\sum_{v_i\in V(\sigma)} \varepsilon(v_i) \ .
\end{equation}
Equivalently, $w_\varepsilon(\sigma )$ is the number of $0$/white-coloured vertices in $\sigma$.
For a fixed bicolouring~$\e$, the weight in Equation~\eqref{eq:weight} induces a filtration of the simplicial chain complex~$C_*(X;\Z_2)$ associated to~$X$. 
More explicitly, set
\[ \mathscr{F}_j (X, \e )  \coloneqq \Z_2 \langle\ \sigma \mid w_{\e}(\sigma) \leq j\ \rangle \subseteq C_*(X;\Z_2) \ . \]
The simplicial differential $\partial$ respects this filtration; it can be decomposed as the sum of two differentials (\emph{cf}.~\cite[Lemma~2.1]{uberhomology}); one which preserves the weight, denoted by~$\partial_h$, and one which decreases the weight by one.   Call $(C(X,\e),\partial_h)$ the bigraded chain complex, whose underlying module is $C(X;\Z_2)$; the first degree is given by simplices' dimensions, while the weight $w_\e$ gives the second. 
The $\e$-\emph{horizontal homology} $\Hh(X,\e)$ of $(X,\e)$ is the homology of the bigraded chain complex $(C(X,\e),\partial_h)$. In other words, $\Hh(X,\e)$ is the homology of the graded object associated to the filtration $\mathscr{F}_j (X, \e )$. 

The next step towards the definition of the \"uberhomology is to note that the bicolourings on $X$ can be canonically identified with elements of the Boolean poset~$B(m)$ on a set with $m$ elements (partially ordered by inclusion).
Let $\e$ and $\e'$ be two bicolourings on $X$ differing only on a vertex $v_i$; assume further that $\e(v_i) = 0$ and~$\e'(v_i) =1$.  Denote by $d_{\e, \e'} $ the weight-preserving part of the identity map $\mathrm{Id} \colon \mathrm{H}^h(X,\e) \to \mathrm{H}^h(X,\e')$. With a slight abuse of notation, $d_{\e, \e'} $ can be written as
\[ d_{\e, \e'} (\sigma) = \begin{cases} \sigma & \text{if }w_{\e}(\sigma) = w_{\e'}(\sigma) \\ 0 &\text{otherwise}\end{cases} \ ,  \]
see~\cite[Section~6]{uberhomology}.
Note that the latter case can only occur if $w_{\e}(\sigma) = w_{\e'}(\sigma)-1$.

\begin{figure}[ht]
\begin{tikzpicture}[scale = .8]
\draw[dotted] (0,0) -- (0,-4.5) node[below] {$\UC^0(X)$};

\draw[dotted] (5,3) -- (5,-4.5) node[below] {$\UC^1(X)$};

\draw[dotted] (10,3) -- (10,-4.5) node[below] {$\UC^2(X)$};

\draw[dotted] (15,0) -- (15,-4.5) node[below] {$\UC^3(X)$};

\node[below] (uc0) at (0,-4.5) {\phantom{$\UC^0(X)$}};
\node[below] (uc1) at (5,-4.5) {\phantom{$\UC^0(X)$}};
\node[below] (uc2) at (10,-4.5) {\phantom{$\UC^0(X)$}};
\node[below] (uc3) at (15,-4.5) {\phantom{$\UC^0(X)$}};

\draw[->] (uc0) -- (uc1) node[midway, above] {$d^0$};
\draw[->] (uc1) -- (uc2) node[midway, above] {$d^1$};
\draw[->] (uc2) -- (uc3) node[midway, above] {$d^2$};

\node[fill, white] at (5,1.5){$\oplus$};
\node[fill, white] at (5,-1.5){$\oplus$};
\node  at (5,1.5){$\oplus$};
\node  at (5,-1.5){$\oplus$};

\node[fill, white] at (10,1.5){$\oplus$};
\node[fill, white] at (10,-1.5){$\oplus$};
\node  at (10,1.5){$\oplus$};
\node  at (10,-1.5){$\oplus$};

\node[fill, white] at (0,0) {${\Hh(X,(0,0,0))}$};

\node[fill, white] at (5,3){${\Hh(X,(1,0,0))}$};
\node[fill, white] at (5,0){${\Hh(X,(0,1,0))}$};
\node[fill, white] at (5,-3){${\Hh(X,(0,0,1))}$};

\node[fill, white] at (10,3) {${\Hh(X,(1,1,0))}$};
\node[fill, white] at (10,0) {${\Hh(X,(1,0,1))}$};
\node[fill, white] at (10,-3) {${\Hh(X,(0,1,1))}$};

\node[fill, white] at (15,0) {${\Hh(X,(1,1,1))}$};

\node (a) at (0,0) {${\Hh(X,(0,0,0))}$};

\node (b1) at (5,3) {${\Hh(X,(1,0,0))}$};
\node (b2) at (5,0) {${\Hh(X,(0,1,0))}$};
\node (b3) at (5,-3){${\Hh(X,(0,0,1))}$};

\node (c1) at (10,3) {${\Hh(X,(1,1,0))}$};
\node (c2) at (10,0) {${\Hh(X,(1,0,1))}$};
\node (c3) at (10,-3) {${\Hh(X,(0,1,1))}$};

\node (d) at (15,0) {${\Hh(X,(1,1,1))}$};

\draw[thick, ->] (a) -- (b1) node[midway,above,rotate =31] {$d_{(*,0,0)}$}; 
\draw[thick, ->] (a) -- (b2) node[midway,above] {$d_{(0,*,0)}$}; 
\draw[thick, ->] (a) -- (b3) node[midway,below,rotate =-29] {$d_{(0,0,*)}$}; 

\draw[thick, ->] (b1) -- (c1) node[midway,above] {$d_{(1,*,0)}$}; 
\draw[thick, ->] (b1) -- (c2) node[midway,above left,rotate =-29] {$d_{(1,0,*)}$}; 

\draw[thick, ->] (b3) -- (c2) node[midway,below left,rotate =31] {$d_{(*,0,1)}$}; 
\draw[thick, ->] (b3) -- (c3) node[midway,below] {$d_{(0,*,1)}$}; 

\draw[line width = 5, white] (b2) -- (c1) ; 
\draw[line width = 5, white] (b2) -- (c3) ; 
\draw[thick, ->] (b2) -- (c1) node[midway,below left,rotate =31] {$d_{(*,1,0)}$} ; 
\draw[thick, ->] (b2) -- (c3) node[midway,above left,rotate =-29] {$d_{(0,1,*)}$}; 

\draw[thick, <-] (d) -- (c1) node[midway,above,rotate =-29] {$d_{(1,1,*)}$}; 
\draw[thick, <-] (d) -- (c2) node[midway,above] {$d_{(1,*,1)}$}; 
\draw[thick, <-] (d) -- (c3) node[midway,below,rotate =31] {$d_{(*,1,1)}$}; 
\end{tikzpicture}

\caption{The Boolean poset $B(3)$ with vertices decorated by the horizontal homologies of a simplicial complex with $3$ vertices, and its ``flattening'' to the \"uber chain complex.}
\label{fig:cubo}
\end{figure}
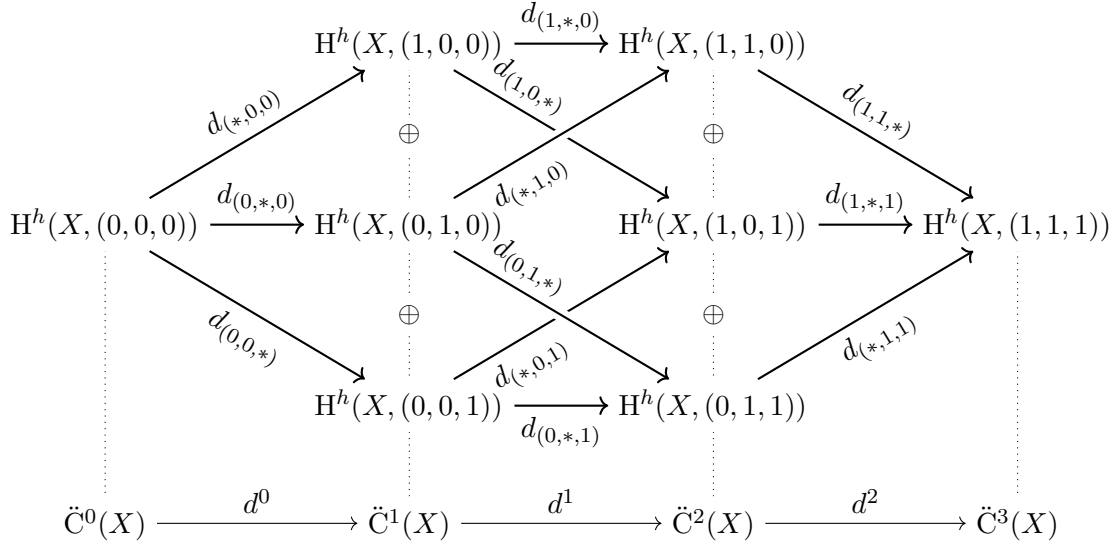

For a given bicolouring $\e$ on $X$,  set $\ell(\e) \coloneqq \sum_{j} \e(v_j) $.  The \emph{$j$-th \"uber chain module} is then defined as follows:
\begin{equation}\label{eq:uberchain}
\UC^{j}(X;\Z_2) = \bigoplus_{\ell(\e) = j} \Hh(X,\e) \ .
\end{equation}
By \cite[Proposition~6.2]{uberhomology}, the map 
\begin{equation}\label{eq:uberdiff}
\ddot{d}^j\coloneqq \sum_{\ell(\e) = j} d_{\e,\e'}\colon \UC^{j}(X;\Z_2)\to \UC^{j+1}(X;\Z_2)
\end{equation}
is a differential of degree $1$, turning $\left(\UC^{*}(X;\Z_2), \ddot{d}\right)$ into a cochain complex.
A schematic summary for the construction of the \"uber chain complex is presented in~Figure~\ref{fig:cubo}.

\begin{defn}\label{def:uber}
The \emph{\"uberhomology} $\UH^*(X)$ of a finite and connected simplicial complex $X$ is the homology of the complex $\left(\UC^{*}(X;\Z_2), \ddot{d}\right)$.
\end{defn}

\"Uberhomology groups can be endowed with two extra gradings, yielding a triply graded module. Indeed, the differential $\ddot{d}$ preserves both the simplices' weight and dimension. 
The notation for the three gradings on the \"uberhomology is as follows: $\UH^j_{k,i}(X)$ denotes the component of the homology generated by simplices of dimension $i$, with $k$ vertices of colour $0$, and whose (\"uber)homological degree is $j$.

The above definition can be rephrased in terms of poset homology~\cite{chandler2019posets, primo}, which allows extending the definition of \"uberhomology from $\Z_2$ to more general  coefficients:
\begin{prop}[{\cite[Proposition~2.14]{domination}}]\label{rem:ubergencoeff}
Let $X$ be a connected simplicial complex with $m$ vertices,  and $\mathbf{Mod}_{R}$ the category of $R$-modules over a commutative ring~$R$.
Then, the \"uberhomology of  $X$ with coefficients in $R$ coincides with the poset homology of $B(m)$, with coefficients in a suitable functor $\mathcal{H}\colon {B}(m)\to \mathbf{Mod}_{R}$. 
\end{prop}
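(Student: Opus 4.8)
The plan is to unwind both constructions and match them term by term. First I would recall the definition of poset homology with coefficients in a functor $\mathcal{F}\colon P \to \mathbf{Mod}_R$: for a poset $P$, one forms the cochain complex whose degree-$j$ term is $\bigoplus_{c} \mathcal{F}(c)$, the sum running over chains (or, in the Boolean case, over elements of rank $j$), with differential assembled from the structure maps $\mathcal{F}(c) \to \mathcal{F}(c')$ weighted by incidence signs. For $P = B(m)$, elements of rank $j$ are exactly the subsets of $\{1,\dots,m\}$ of size $j$, which under the identification of subsets with bicolourings (a subset $S$ corresponds to the bicolouring $\e_S$ with $\e_S(v_i)=1$ iff $i\in S$) are precisely the bicolourings $\e$ with $\ell(\e)=j$. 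So the degree-$j$ term of the poset complex is $\bigoplus_{\ell(\e)=j}\mathcal{H}(\e)$, and the candidate functor is forced: $\mathcal{H}(\e) \coloneqq \Hh(X,\e)$ on objects.

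Next I would define $\mathcal{H}$ on morphisms. A covering relation in $B(m)$ is a pair $\e < \e'$ differing in a single coordinate $v_i$, with $\e(v_i)=0$ and $\e'(v_i)=1$; to this I assign the map $d_{\e,\e'}\colon \Hh(X,\e)\to\Hh(X,\e')$ already introduced in the excerpt (the weight-preserving part of the identity). To get a genuine functor $B(m)\to\mathbf{Mod}_R$ I must check that these covering maps compose consistently: for any two bicolourings $\e \leq \e'$ the composite along a saturated chain from $\e$ to $\e'$ is independent of the chosen chain. This reduces, as usual, to the rank-$2$ case — the commuting squares in $B(m)$, i.e. configurations where $\e$ and $\e'$ differ in two coordinates — and there it follows from the fact that $d_{\e,\e'}$ is simply ``keep $\sigma$ if its weight is unchanged, kill it otherwise'', so each composite around the square equals the map sending $\sigma\mapsto\sigma$ exactly when $w$ is preserved along the whole way and to $0$ otherwise; both composites give the same answer. (Alternatively one invokes that $\partial_h$ and the weight-preserving identity maps are all induced by literal identities on $C(X;\Z_2)$, truncated by the weight filtration, so strict functoriality is automatic before passing to homology and descends.)

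Finally I would identify the differentials. The poset-homology differential on $\bigoplus_{\ell(\e)=j}\mathcal{H}(\e)$ is $\sum_{\e\lessdot\e'} [\e:\e']\, d_{\e,\e'}$ where $[\e:\e']$ is the incidence number of the cubical/Boolean complex; over $\Z_2$ these signs are all $1$, which matches $\ddot d^j = \sum_{\ell(\e)=j} d_{\e,\e'}$ from Equation~\eqref{eq:uberdiff} on the nose, so the two cochain complexes are literally equal and hence have the same homology, namely $\UH^*(X)$. For general $R$ one takes $\ddot d^j$ with the standard cubical signs (which is the natural lift of the $\Z_2$ definition and is the one implicit in extending \"uberhomology to $R$-coefficients), and the same term-by-term comparison applies.

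The main obstacle is the functoriality check in the middle paragraph: verifying that the assignment $\e\mapsto\Hh(X,\e)$, $(\e\lessdot\e')\mapsto d_{\e,\e'}$ really is a functor on all of $B(m)$, not just on covering relations — i.e. coherence over the rank-two squares — and that the resulting poset-homology differential squares to zero in a way compatible with $\ddot d\circ\ddot d=0$ from \cite[Proposition~6.2]{uberhomology}. Everything else (matching chain modules, matching differentials up to sign conventions) is bookkeeping.
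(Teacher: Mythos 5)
This proposition is not proved in the paper: it is quoted verbatim from \cite[Proposition~2.14]{domination}, and the text immediately after it explicitly defers to that reference (``We refer to \cite{domination} for the proof of Proposition~\ref{rem:ubergencoeff}''). So there is no in-paper proof to compare against. Judged on its own, your argument is the natural one and is essentially correct: you match the rank-$j$ elements of $B(m)$ with bicolourings $\e$ of $\ell(\e)=j$, you define $\mathcal{H}$ on objects by $\e\mapsto \Hh(X,\e)$ and on covering relations by $d_{\e,\e'}$, you verify coherence over rank-two squares, and you match the cubical incidence signs against $\ddot d$. The functoriality check in your middle paragraph is the right thing to verify, and the observation that $d_{\e,\e'}$ and $\partial_h$ are all ``truncations'' of literal identities on the underlying chains does make the square-commutativity automatic before passing to homology (one should also note, as you implicitly do, that $d_{\e,\e'}$ is actually a chain map for the two different horizontal differentials $\partial_h^{\e}$ and $\partial_h^{\e'}$ — this follows from the same chain-level case analysis).

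One place you are a bit too glib is the final sentence about general $R$. As stated it is circular: you appeal to ``the natural lift'' of $\ddot d^j$ with ``the standard cubical signs'' as ``the one implicit in extending überhomology to $R$-coefficients'', but the entire point of the proposition is that poset homology with coefficients in $\mathcal{H}$ \emph{is} the mechanism that makes this extension canonical. The accurate framing is: prove the identification over $\Z_2$ (where the Boolean incidence numbers are all $1$, so the two differentials agree on the nose), then \emph{define} $\UH^*(X;R)$ as the poset homology with coefficients in $\mathcal{H}\colon B(m)\to\mathbf{Mod}_R$. Relatedly, for a genuine comparison over $R$ one needs to know that the result is independent of the sign assignment on the cube — the present paper flags this explicitly in the proof of Theorem~\ref{thm:uber=MV} by citing \cite[Theorem~3.16 \& Corollary~3.18]{primo}, and your proposal should at least acknowledge that independence rather than treat it as self-evident.
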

We refer to \cite{domination} for the proof of Proposition~\ref{rem:ubergencoeff}, and for a more detailed account of the poset homology interpretation. 

As mentioned above, the \"uberdifferential $\ddot{d}$ preserves the $(k,i)$-bidegree. In particular,  specialising to the component of \"uberhomology of weight $0$ yields a bigraded homology.

\begin{defn}
For a simplicial complex $X$, define the $0$-degree \"uberhomology to be the bigraded homology
\[
\BH^j_i (X) \coloneqq \UH^j_{0,i}(X) \ .
\]
\end{defn}

An alternative definition of $\BH(X)$ is the following; for $\varepsilon \in \Z_2^m$ define $X_\varepsilon$ to be the  
simplicial subcomplex of $X$ induced by the $1$-coloured vertices with respect to $\varepsilon$. The homology $\BH^*_i(X)$ is obtained by decorating each vertex~$\varepsilon$ in the Boolean poset~$B(m)$ with the $i$-th homology~$ \mathrm{H}_i (X_\varepsilon)$ of $X_\e$; the differentials associated to the cube's edges are induced by inclusion. This is to say that $\BH^*_i(X)$ is the poset homology on the Boolean poset $B(m)$ with coefficients in the functor given by simplicial homology in dimension~$i$. 

Using Proposition~\ref{rem:ubergencoeff}, the definition of $\BH^*_*$ can be extended to encompass general coefficients; set $\BH^j_i(X;R)$ for the $0$-degree \"uberhomology of $X$, with coefficients in a commutative ring $R$.
\begin{example}
As an example, we can compute the homology $\BH^j_i(\partial \Delta^2)$ of the boundary of $\Delta^2$. The chain complex for is shown in Figure~\ref{fig:triang}. This complex is concentrated in homological degrees between $1$ and $3$, and simplicial degrees $0$ and $1$. In degree $i=0$, it is isomorphic to the simplicial chain complex associated to $\partial \Delta^{2}$, while in degree $i=1$ there are only trivial differentials, and a unique non-trivial summand in degree $j=3$. It follows that 
\[ \BH^{j}_{i}(\tK_3) = \begin{cases} \Z  & \text{if }(j,i)\in \{ (1,0),(3,1)\},\\ 0 & \text{otherwise.}\end{cases}\]
More explicitly, the generator in bidegree $(1,0)$ is spanned by the direct sum of the three connected components identified by a single black vertex (see the first column of Figure~\ref{fig:triang}). The other generator can instead be identified with the fundamental class of $\partial \Delta^2$, regarded as a triangulation of $S^1$. 

\begin{figure}[ht]

\begin{tikzpicture}[scale = .75]
\draw[dotted] (0,0) -- (0,-5.5) node[below] {\phantom{$\UC^0 (X)$}};

\draw[dotted] (5,3) -- (5,-5.5) node[below] {\phantom{$\UC^0 (X)$}};

\draw[dotted] (10,3) -- (10,-5.5) node[below] {\phantom{$\UC^0 (X)$}};

\draw[dotted] (15,0) -- (15,-5.5) node[below] {\phantom{$\UC^0 (X)$}};

\node[below] (uc0) at (0,-5.5) {{$_{\phantom{(0)}}0^{\phantom{(0)}}$}};
\node[below] (uc1) at (5,-5.5) {{$\Z_{(0)}^3$}};
\node[below] (uc2) at (10,-5.5) {{$\Z_{(0)}^3$}};
\node[below] (uc3) at (15,-5.5) {{$\Z_{(0)}^{\phantom{(0)}}\oplus \Z^{\phantom{(0)}}_{(1)}$}};

\draw[->] (uc0) -- (uc1) node[midway, above] {};
\draw[->] (uc1) -- (uc2) node[midway, above] {};
\draw[->] (uc2) -- (uc3) node[midway, above] {};

\node[fill, white] at (5,1.5){$\oplus$};
\node[fill, white] at (5,-1.5){$\oplus$};
\node  at (5,1.5){$\oplus$};
\node  at (5,-1.5){$\oplus$};

\node[fill, white] at (10,1.5){$\oplus$};
\node[fill, white] at (10,-1.5){$\oplus$};
\node  at (10,1.5){$\oplus$};
\node  at (10,-1.5){$\oplus$};

\node[fill, white] at (0,0) {${\rm H}_*\left(\text{\raisebox{-1em}{\begin{tikzpicture}[scale =1, very thick]
    \node (a) at (0,0) {} ;
    \node (b) at (1,0) {};
    \node (c) at (.5, .866) {};
    \draw[white] (0,0) circle (0.05) ;
    \draw[white] (1,0) circle (.05);
    \draw[white] (.5, .866) circle (.05);
    \draw[white] (a) -- (b);
    \draw[white] (c) -- (b);
    \draw[white] (a) -- (c);
    \end{tikzpicture}}}\right)$};

\node[fill, white] at (5,3){${\rm H}_*\left(\text{\raisebox{-1em}{\begin{tikzpicture}[scale =1, very thick]
    \node (a) at (0,0) {} ;
    \node (b) at (1,0) {};
    \node (c) at (.5, .866) {};
    \draw[, white] (0,0) circle (0.05) ;
    \draw[, white] (1,0) circle (.05);
    \draw[, white] (.5, .866) circle (.05);
    \draw[white] (a) -- (b);
    \draw[white] (c) -- (b);
    \draw[white] (a) -- (c);
    \end{tikzpicture}}}\right)$};
\node[fill, white] at (5,0){${\rm H}_*\left(\text{\raisebox{-1em}{\begin{tikzpicture}[scale =1, very thick]
    \node (a) at (0,0) {} ;
    \node (b) at (1,0) {};
    \node (c) at (.5, .866) {};
    \draw[, white] (0,0) circle (0.05) ;
    \draw[, white] (1,0) circle (.05);
    \draw[, white] (.5, .866) circle (.05);
    \draw[white] (a) -- (b);
    \draw[white] (c) -- (b);
    \draw[white] (a) -- (c);
    \end{tikzpicture}}}\right)$};
\node[fill, white] at (5,-3){${\rm H}_*\left(\text{\raisebox{-1em}{\begin{tikzpicture}[scale =1, very thick]
    \node (a) at (0,0) {} ;
    \node (b) at (1,0) {};
    \node (c) at (.5, .866) {};
    \draw[, white] (0,0) circle (0.05) ;
    \draw[, white] (1,0) circle (.05);
    \draw[, white] (.5, .866) circle (.05);
    \draw[white] (a) -- (b);
    \draw[white] (c) -- (b);
    \draw[white] (a) -- (c);
    \end{tikzpicture}}}\right)$};

\node[fill, white] at (10,3) {${\rm H}_*\left(\text{\raisebox{-1em}{\begin{tikzpicture}[scale =1, very thick]
    \node (a) at (0,0) {} ;
    \node (b) at (1,0) {};
    \node (c) at (.5, .866) {};
    \draw[, white] (0,0) circle (0.05) ;
    \draw[, white] (1,0) circle (.05);
    \draw[, white] (.5, .866) circle (.05);
    \draw[white] (a) -- (b);
    \draw[white] (c) -- (b);
    \draw[white] (a) -- (c);
    \end{tikzpicture}}}\right)$};
\node[fill, white] at (10,0) {${\rm H}_*\left(\text{\raisebox{-1em}{\begin{tikzpicture}[scale =1, very thick]
    \node (a) at (0,0) {} ;
    \node (b) at (1,0) {};
    \node (c) at (.5, .866) {};
    \draw[, white] (0,0) circle (0.05) ;
    \draw[, white] (1,0) circle (.05);
    \draw[, white] (.5, .866) circle (.05);
    \draw[white] (a) -- (b);
    \draw[white] (c) -- (b);
    \draw[white] (a) -- (c);
    \end{tikzpicture}}}\right)$};
\node[fill, white] at (10,-3) {${\rm H}_*\left(\text{\raisebox{-1em}{\begin{tikzpicture}[scale =1, very thick]
    \node (a) at (0,0) {} ;
    \node (b) at (1,0) {};
    \node (c) at (.5, .866) {};
    \draw[, white] (0,0) circle (0.05) ;
    \draw[, white] (1,0) circle (.05);
    \draw[, white] (.5, .866) circle (.05);
    \draw[white] (a) -- (b);
    \draw[white] (c) -- (b);
    \draw[white] (a) -- (c);
    \end{tikzpicture}}}\right)$};

\node[fill, white] at (15,0) {${\rm H}_*\left(\text{\raisebox{-1em}{\begin{tikzpicture}[scale =1, very thick]
    \node (a) at (0,0) {} ;
    \node (b) at (1,0) {};
    \node (c) at (.5, .866) {};
    \draw[, white] (0,0) circle (0.05) ;
    \draw[, white] (1,0) circle (.05);
    \draw[, white] (.5, .866) circle (.05);
    \draw[white] (a) -- (b);
    \draw[white] (c) -- (b);
    \draw[white] (a) -- (c);
    \end{tikzpicture}}}\right)$};

\node (a0) at (0,0) {{${\rm H}_*\left(\text{\raisebox{-1em}{\begin{tikzpicture}[scale =1, very thick]
    \node (a) at (0,0) {} ;
    \node (b) at (1,0) {};
    \node (c) at (.5, .866) {};
    \draw[fill, gray, opacity = .2] (0,0) circle (0.05) ;
    \draw[fill, gray, opacity = .2] (1,0) circle (.05);
    \draw[fill, gray, opacity = .2] (.5, .866) circle (.05);
    \draw[gray, opacity = .2] (a) -- (b);
    \draw[gray, opacity = .2] (c) -- (b);
    \draw[gray, opacity = .2] (a) -- (c);
    \end{tikzpicture}}}\right)$}};

\node (b1) at (5,3) {{${\rm H}_*\left(\text{\raisebox{-1em}{\begin{tikzpicture}[scale =1, very thick]
    \node (a) at (0,0) {} ;
    \node (b) at (1,0) {};
    \node (c) at (.5, .866) {};
    \draw[fill, black] (0,0) circle (0.05) ;
    \draw[fill, gray, opacity = .2] (1,0) circle (.05);
    \draw[fill, gray, opacity = .2] (.5, .866) circle (.05);
    \draw[gray, opacity = .2] (a) -- (b);
    \draw[gray, opacity = .2] (c) -- (b);
    \draw[gray, opacity = .2] (a) -- (c);
    \end{tikzpicture}}}\right)$}};
\node (b2) at (5,0) {{${\rm H}_*\left(\text{\raisebox{-1em}{\begin{tikzpicture}[scale =1, very thick]
    \node (a) at (0,0) {} ;
    \node (b) at (1,0) {};
    \node (c) at (.5, .866) {};
    \draw[fill, gray, opacity = .2] (0,0) circle (0.05) ;
    \draw[fill, black] (1,0) circle (.05);
    \draw[fill, gray, opacity = .2] (.5, .866) circle (.05);
    \draw[gray, opacity = .2] (a) -- (b);
    \draw[gray, opacity = .2] (c) -- (b);
    \draw[gray, opacity = .2] (a) -- (c);
    \end{tikzpicture}}}\right)$}};
\node (b3) at (5,-3){{${\rm H}_*\left(\text{\raisebox{-1em}{\begin{tikzpicture}[scale =1, very thick]
    \node (a) at (0,0) {} ;
    \node (b) at (1,0) {};
    \node (c) at (.5, .866) {};
    \draw[fill, gray, opacity = .2] (0,0) circle (0.05) ;
    \draw[fill, gray, opacity = .2] (1,0) circle (.05);
    \draw[fill, black] (.5, .866) circle (.05);
    \draw[gray, opacity = .2] (a) -- (b);
    \draw[gray, opacity = .2] (c) -- (b);
    \draw[gray, opacity = .2] (a) -- (c);
    \end{tikzpicture}}}\right)$}};

\node (c1) at (10,3) {{${\rm H}_*\left(\text{\raisebox{-1em}{\begin{tikzpicture}[scale =1, very thick]
    \node (a) at (0,0) {} ;
    \node (b) at (1,0) {};
    \node (c) at (.5, .866) {};
    \draw[fill, black] (0,0) circle (0.05) ;
    \draw[fill, black] (1,0) circle (.05);
    \draw[fill, gray, opacity = .2] (.5, .866) circle (.05);
    \draw[black] (a) -- (b);
    \draw[gray, opacity = .2] (c) -- (b);
    \draw[gray, opacity = .2] (a) -- (c);
    \end{tikzpicture}}}\right)$}};
\node (c2) at (10,0) {{${\rm H}_*\left(\text{\raisebox{-1em}{\begin{tikzpicture}[scale =1, very thick]
    \node (a) at (0,0) {} ;
    \node (b) at (1,0) {};
    \node (c) at (.5, .866) {};
    \draw[fill, black] (0,0) circle (0.05) ;
    \draw[fill, gray, opacity = .2] (1,0) circle (.05);
    \draw[fill, black] (.5, .866) circle (.05);
    \draw[gray, opacity = .2] (a) -- (b);
    \draw[gray, opacity = .2] (c) -- (b);
    \draw[black] (a) -- (c);
    \end{tikzpicture}}}\right)$}};
\node (c3) at (10,-3){{${\rm H}_*\left(\text{\raisebox{-1em}{\begin{tikzpicture}[scale =1, very thick]
    \node (a) at (0,0) {} ;
    \node (b) at (1,0) {};
    \node (c) at (.5, .866) {};
    \draw[fill, gray, opacity = .2] (0,0) circle (0.05) ;
    \draw[fill, black] (1,0) circle (.05);
    \draw[fill, black] (.5, .866) circle (.05);
    \draw[gray, opacity = .2] (a) -- (b);
    \draw[black] (c) -- (b);
    \draw[gray, opacity = .2] (a) -- (c);
    \end{tikzpicture}}}\right)$}};

\node (d) at (15,0) {{${\rm H}_*\left(\text{\raisebox{-1em}{\begin{tikzpicture}[scale =1, very thick]
    \node (a) at (0,0) {} ;
    \node (b) at (1,0) {};
    \node (c) at (.5, .866) {};
    \draw[fill, black] (0,0) circle (0.05) ;
    \draw[fill, black] (1,0) circle (.05);
    \draw[fill, black] (.5, .866) circle (.05);
    \draw[black] (a) -- (b);
    \draw[black] (c) -- (b);
    \draw[black] (a) -- (c);
    \end{tikzpicture}}}\right)$}};

\draw[thick, ->] (a0) -- (b1);
\draw[thick, ->] (a0) -- (b2); 
\draw[thick, ->] (a0) -- (b3);

\draw[thick, ->] (b1) -- (c1); 
\draw[thick, ->] (b1) -- (c2);

\draw[thick, ->] (b3) -- (c2);
\draw[thick, ->] (b3) -- (c3);
\draw[line width = 5, white] (b2) -- (c1) ; 
\draw[line width = 5, white] (b2) -- (c3) ; 
\draw[thick, ->] (b2) -- (c1);
\draw[thick, ->] (b2) -- (c3);

\draw[thick, <-] (d) -- (c1) ;
\draw[thick, <-] (d) -- (c2) ; 
\draw[thick, <-] (d) -- (c3) ;
\end{tikzpicture}

\caption{The $0$-degree \"uberchain complex of $\partial\Delta^2$. Here $\Z^{d}_{(i)}$ denotes a $\Z^d$ summand in $\BH^{*}_{i}$. }\label{fig:triang}
\end{figure}
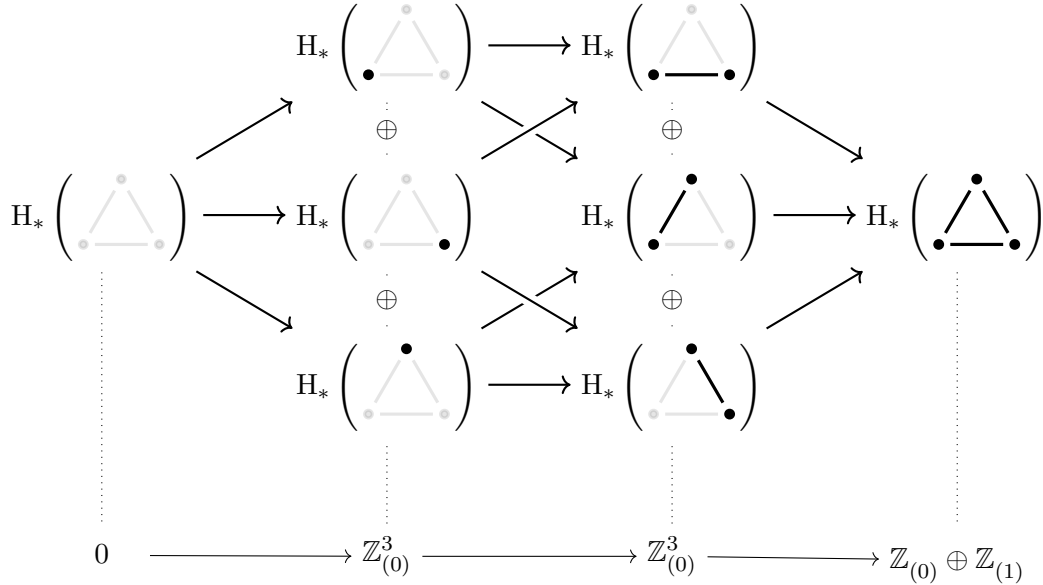
\end{example}

\subsection{Bold homology}

The specialisation of $\BH^*_i(X)$ to $i=0$ is known as \emph{bold homology}, and is denoted by $\mathbb{H}^* (X)$. This homology was introduced in~\cite[Section~8]{uberhomology}, and it was shown to contain non-trivial combinatorial information on simple  graphs~\cite{domination}. 
The relations between the three homologies introduced so far can be schematically summarised as follows:
$$\UH^j_{k,i}(X) \xrightarrow[k = 0]{\text{restrict to}} \BH^j_i(X) \xrightarrow[\text{i = 0}]{\text{restrict to}} \mathbb{H}^j(X) $$

Let $\tG$ be a connected simple graph, that is, a connected $1$-dimensional simplicial complex. A~subset $S\subseteq V(\tG)$ of the vertices of $\tG$ is called
\begin{itemize}
\item \emph{dominating} if each vertex in $V(\tG)$ either belongs to $S$ or shares an edge with some element of $S$;
\item \emph{connected} if $S$ spans a connected subcomplex.
\end{itemize}
The \emph{connected domination polynomial} of a graph $\tG$ is defined as 
\begin{equation}\label{eq:cdompoly}
    D_c(\tG)(t) = \sum_{S} t^{|S|} \, \in \Z[t] \ ,
\end{equation} 
where $S$ ranges among connected dominating sets in $V(\tG)$. Computing the connected domination polynomial of a graph is known to be NP-hard~\cite{garey1979computers}. In~\cite{domination}, the authors prove the existence of a tight relation between connected domination polynomials  and the bold homology's Euler characteristic:

\begin{thm}[{\cite[Theorem~1.2]{domination}}]
The bold homology  categorifies $D_c(-1)$. More precisely,~$\mathbb{H}^*$ is functorial under inclusion of graphs, and its Euler characteristic is $D_c(-1)$.
\end{thm}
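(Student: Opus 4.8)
I would treat the two assertions separately: the functoriality is essentially formal once bold homology is presented via the Boolean cube, while the content of the theorem is the Euler characteristic identity, for which the plan is a short inclusion--exclusion (M\"obius) argument on that cube.

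\emph{Functoriality.} Using the cube/poset--homology description recalled above, I would realise $\mathbb{H}^{*}(\tG)$ as the cohomology of the complex $\bigl(\bigoplus_{\e\in\Z_2^{m}}\mathrm{H}_{0}(\tG_{\e};\Z_2),\ \ddot{d}\bigr)$ ($m$ the number of vertices), where $\tG_{\e}$ is the subgraph induced by the black vertices of $\e$ and $\ddot{d}$ is the sum, over the covering relations of $\Z_2^{m}$, of the maps on $\mathrm{H}_{0}$ induced by the inclusions $\tG_{\e}\hookrightarrow\tG_{\e'}$; this is the weight-zero, dimension-zero part of $\UC^{*}$, the identification being that $\partial_{h}$ restricts to the ordinary simplicial differential on the all-black simplices, which are exactly the simplices of $\tG_{\e}$, so the weight-zero part of $\Hh(\tG,\e)$ is $\mathrm{H}_{*}(\tG_{\e})$. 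Given $\tG$ realised as an \emph{induced} subgraph of $\tH$, the key point is that whenever a bicolouring $\e$ of $\tH$ is supported on $V(\tG)$ one has $\tH_{\e}=\tG_{\e}$, while the summands indexed by the remaining bicolourings (those with a black vertex outside $V(\tG)$) span a subcomplex of $\UC^{*}_{0,0}(\tH)$, since $\ddot{d}$ only enlarges the set of black vertices. Quotienting by that subcomplex yields a chain map $\UC^{*}_{0,0}(\tH)\twoheadrightarrow\UC^{*}_{0,0}(\tG)$, hence a restriction homomorphism $\mathbb{H}^{*}(\tH)\to\mathbb{H}^{*}(\tG)$; compatibility with composition of inclusions and with identities is immediate from the description of these quotient maps. (Over a commutative ring $R$ the same discussion applies to $\BH^{*}_{*}$.)

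\emph{Euler characteristic.} Since $\mathbb{H}^{*}(\tG)$ is the cohomology of a bounded complex of finite-dimensional $\Z_2$-vector spaces, its Euler characteristic is that of the complex itself, so
\begin{align*}
\chi\bigl(\mathbb{H}^{*}(\tG)\bigr)
&=\sum_{j=0}^{m}(-1)^{j}\dim\UC^{j}_{0,0}(\tG)\\
&=\sum_{\e\in\Z_2^{m}}(-1)^{\ell(\e)}\dim\mathrm{H}_{0}(\tG_{\e};\Z_2)\\
&=\sum_{S\subseteq V(\tG)}(-1)^{|S|}\,c(\tG[S]),
\end{align*}
where in the last step I identify a bicolouring with the set $S$ of its black vertices and write $c(\tG[S])$ for the number of connected components of $\tG[S]$ (so $c(\emptyset)=0$). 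I would then expand $c(\tG[S])=\sum_{C}1$ over the connected components $C$ of $\tG[S]$ and interchange the order of summation, turning the right-hand side into a sum over pairs $(S,C)$ with $C$ a connected component of $\tG[S]$. These pairs are exactly those with $\tG[C]$ connected and $C\subseteq S\subseteq V(\tG)\setminus N(C)$, where $N(C)$ is the set of vertices adjacent to, but not contained in, $C$: the condition $S\cap N(C)=\emptyset$ expresses precisely that $C$ is a whole component, and not merely a connected subgraph, of $\tG[S]$. Grouping by $C$ and setting $W=V(\tG)\setminus\bigl(C\cup N(C)\bigr)$, the inner sum is $(-1)^{|C|}\sum_{T\subseteq W}(-1)^{|T|}$, which vanishes unless $W=\emptyset$; and $W=\emptyset$ says exactly that $C\cup N(C)=V(\tG)$, i.e.\ that $C$ is a dominating set. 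Hence only connected dominating sets contribute, leaving $\sum_{C}(-1)^{|C|}=D_{c}(\tG)(-1)$, as desired.

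\emph{Main obstacle.} There is no real topological or homological difficulty: once bold homology is recast via the cube the whole argument is combinatorial. The step that needs care, and which I regard as the heart of the matter, is the re-summation over pairs $(S,C)$ together with the observation that ``$C$ is a component of $\tG[S]$'' is equivalent to ``$\tG[C]$ is connected and $C\subseteq S\subseteq V(\tG)\setminus N(C)$'', so that collapsing the free variable through the vanishing of $\sum_{T\subseteq W}(-1)^{|T|}$ produces exactly the connected-domination condition $C\cup N(C)=V(\tG)$. I would sanity-check the underlying identity $\sum_{S}(-1)^{|S|}c(\tG[S])=D_{c}(\tG)(-1)$ on small examples such as $P_{3}$ or $C_{4}$ before writing the argument out in full.
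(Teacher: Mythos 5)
The paper at hand does not prove this statement — it is quoted verbatim from~\cite{domination}, where it appears as Theorem~1.2 — so there is no in-paper argument to compare against. Judged on its own merits, your proposal is correct and gives a complete proof along the lines that~\cite{domination} uses. The functoriality part is handled correctly: the span of bicolourings of $\tH$ with a black vertex outside $V(\tG)$ is upward closed in $B(|V(\tH)|)$, hence a subcomplex for $\ddot d$ (which only turns white vertices black), and the quotient is identified with $\UC^{*}_{0,0}(\tG)$ precisely because $\tG$ is \emph{induced} in $\tH$, so $\tH_{\e}=\tG_{\e}$ for bicolourings $\e$ supported on $V(\tG)$; you rightly flag this hypothesis. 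The Euler-characteristic computation is also sound: passing from $\chi(\mathbb{H}^{*})$ to $\sum_{S}(-1)^{|S|}c(\tG[S])$ is immediate since the complex is bounded with finite-dimensional terms, and the re-summation over pairs $(S,C)$ with the equivalence ``$C$ is a component of $\tG[S]$'' $\iff$ ``$\tG[C]$ connected and $C\subseteq S\subseteq V(\tG)\setminus N(C)$'' followed by the vanishing of $\sum_{T\subseteq W}(-1)^{|T|}$ for $W\neq\emptyset$ correctly isolates the connected dominating sets. Two minor points worth tightening if you write this up: first, over a general ring $R$ you should note that a sign assignment on $B(|V(\tH)|)$ restricts to one on the subcube indexed by bicolourings supported on $V(\tG)$, and that the resulting quotient complex is independent (up to isomorphism) of this choice, as in~\cite{primo}; second, make explicit that $C=\emptyset$ never arises in your expansion because $c(\tG[\emptyset])=\dim\mathrm{H}_0(\emptyset)=0$, which matches the convention that a connected dominating set is nonempty whenever $\tG$ has at least one vertex.
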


From this result, some properties and computations of $\mathbb{H}$ can be deduced. For example, 
the bold homology of trees is zero, and it detects complete graphs (see~\cite{domination} for the precise statements). 
Computations performed with bold homology can be extended to simplicial complexes as well; indeed, the following can be deduced at once from the definitions:

\begin{lem}\label{lem:flaggraphs}
Let $X$ be a simplicial complex, and $X^{(1)}$ its $1$-skeleton. Then, there exists a graded isomorphism $\mathbb{H}^*(X) \cong \mathbb{H}^*(X^{(1)})$. 
\end{lem}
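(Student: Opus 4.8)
The plan is to unwind the definition of bold homology $\mathbb{H}^*$ and observe that it only ever sees the $0$-dimensional homology of induced subcomplexes, which depends solely on the $1$-skeleton. Recall that $\BH^*_i(X)$ is the poset homology of $B(m)$ with coefficients in the functor $\varepsilon \mapsto \mathrm{H}_i(X_\varepsilon)$, where $X_\varepsilon$ is the full subcomplex of $X$ induced on the $1$-coloured vertices, and that $\mathbb{H}^*(X) = \BH^*_0(X)$ is the $i=0$ specialisation. So the entire bold chain complex is assembled from the groups $\mathrm{H}_0(X_\varepsilon)$, one for each vertex $\varepsilon$ of the cube, together with the maps induced by the inclusions $X_\varepsilon \hookrightarrow X_{\varepsilon'}$ whenever $\varepsilon < \varepsilon'$ differ in a single coordinate.

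The key observation is that for any simplicial complex $Y$, the group $\mathrm{H}_0(Y)$ (and the map $\mathrm{H}_0(Y) \to \mathrm{H}_0(Y')$ induced by an inclusion $Y \hookrightarrow Y'$) depends only on the $1$-skeleton $Y^{(1)}$: $\mathrm{H}_0(Y)$ is the free module on the set of connected components of $Y$, and two vertices lie in the same component of $Y$ if and only if they are joined by an edge-path, i.e.\ lie in the same component of $Y^{(1)}$. Since taking the induced subcomplex commutes with taking the $1$-skeleton — that is, $(X_\varepsilon)^{(1)} = (X^{(1)})_\varepsilon$ for every $\varepsilon \in \Z_2^m$, as both are the full subgraph of the $1$-skeleton of $X$ spanned by the $1$-coloured vertices — we get a natural identification $\mathrm{H}_0(X_\varepsilon) \cong \mathrm{H}_0\bigl((X^{(1)})_\varepsilon\bigr)$ for each $\varepsilon$, compatible with the cube edge maps on both sides.

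Concretely, I would first note that $X$ and $X^{(1)}$ have the same vertex set, so the indexing Boolean poset $B(m)$ is literally the same for both. Then I would check that the coefficient functors $\cH_0^X\colon \varepsilon \mapsto \mathrm{H}_0(X_\varepsilon)$ and $\cH_0^{X^{(1)}}\colon \varepsilon \mapsto \mathrm{H}_0((X^{(1)})_\varepsilon)$ are naturally isomorphic as functors $B(m) \to \mathbf{Mod}_{\Z_2}$ (or $\mathbf{Mod}_R$), using the $1$-skeleton identity above and the fact that a simplicial inclusion restricts to an inclusion on $1$-skeleta. A natural isomorphism of coefficient functors induces an isomorphism of the corresponding poset (co)chain complexes, hence of their homology; this gives the desired graded isomorphism $\mathbb{H}^*(X) \cong \mathbb{H}^*(X^{(1)})$, respecting the \"uberhomological grading $j$.

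The proof is essentially a bookkeeping exercise, so there is no serious obstacle; the only point requiring a line of care is the compatibility of the component-wise identifications $\mathrm{H}_0(X_\varepsilon) \cong \mathrm{H}_0((X^{(1)})_\varepsilon)$ with the edge maps of the cube, i.e.\ that one genuinely has a natural transformation and not merely a pointwise isomorphism. This follows because the identification on each vertex is induced by the identity on the underlying set of $0$-simplices (both complexes have the same vertices), and all the cube maps on both sides are the component-collapse maps induced by inclusions of vertex sets, which are manifestly the "same" map under this identification.
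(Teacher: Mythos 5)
Your proposal is correct and rests on the same key observation as the paper's proof: connected components of induced subcomplexes depend only on the $1$-skeleton, together with the identity $(X_\varepsilon)^{(1)} = (X^{(1)})_\varepsilon$. The paper's proof is terser, stating the bijection of connected components and then citing \cite[Theorem~1.3]{domination} to conclude, whereas you explicitly build a natural isomorphism of coefficient functors $\mathcal{H}_0^X \cong \mathcal{H}_0^{X^{(1)}}$ on $B(m)$ and invoke the poset-homology description (Proposition~\ref{rem:ubergencoeff}); this is the same argument carried out by hand rather than delegated to a citation.
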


\begin{proof}
Connected components of $1$-coloured subcomplexes in $X^{(1)}$ are in canonical bijection with connected components in $X$. Then, the result follows by \cite[Theorem~1.3]{domination}.
\end{proof}

\section{Anti-star covers and spectral sequences}\label{sec:MVSS}

One of the main  tools employed in (co)homology computations is the generalization of the Mayer-Vietoris long exact sequence in terms of spectral sequences. To set the notations, we start by recalling some basic definitions, referring to~\cite{user} for further details.

We will focus on \emph{augmented} first quadrant spectral sequences of homological type, \emph{i.e.}~spectral sequences arising from first quadrant augmented bicomplexes, whose induced differentials have bidegree $(-r,r-1)$. 
By a first quadrant augmented bicomplex $(C_{p,q},\delta_{p,q})$ of bidegree $(a,b)$ we mean a bigraded $R$-module $C_{p,q}$ with differentials
 \[
  \delta_{p,q}\colon C_{p,q}\longrightarrow C_{p+a,q+b}  \ ,
 \]
where $C_{p,q}=0$ for $p<-1$ and $q<0$. Spectral sequences arise naturally in the context of filtered chain complexes. 
As customary, we say that a spectral sequence $(E^r,d^r)$ \emph{converges} to a graded module $\hh_*$, and write 
$E_{p,q}\Rightarrow \hh_{p+q}$, 
if there is a filtration $F$ on $\hh_*$ such that $E^{\infty}_{p,q}\cong \mathrm{Gr}_{p,q}\hh_*$ for all $p,q$, where $E^{\infty}_{p,q}$ is the limit term of the spectral sequence.

We now specialize  to first quadrant augmented bicomplexes~$(C,\dd_v,\dd_h)$ where $\dd_v$ and  $\dd_h$ are differentials of bidegrees $(0,-1)$ and $(-1,0)$, respectively, and $\dd_v\circ \dd_h=\dd_h\circ \dd_v$. 
Consider the associated \emph{total complex}  
\[\Tot(C)_n:=\bigoplus_{p+q=n}C_{p,q} \ ,\]
with differential $\dd$ defined
by setting $\dd (x):=\dd_h(x)+(-1)^p\dd_v(x)$ for each $x\in C_{p,*}$, and each~$p$. 
There are two natural filtrations  on $\Tot (C)$. The first filtration $F^I$ is defined by cutting the direct sum above at the $p$-level:
$F^I_p(\Tot (C))_n:= \bigoplus_{i\leq p} C_{i,n-i} $.
The second filtration $F^{II}$ is the complementary one:
$F^{II}_q(\Tot(C))_n:=\bigoplus_{j\leq q} C_{n-j,j} $.
In the special case of a first quadrant double chain complex, both filtrations  are bounded (from above and below), hence both the associated spectral sequences  converge to the homology  of the total complex~$\Tot(C)$. 
The $0$-page of the spectral sequence arising from the first filtration is \[
 IE^0_{p,q}:= F_p^I (\Tot (C))_{p+q}/ F_{p-1}^I (\Tot (C))_{p+q}=\bigoplus_{i\leq p} C_{i,p+q-i}/\bigoplus_{i\leq p-1} C_{i,p+q-i}=C_{p,q} \ ,
\]
and for the second filtration is:
\[
 IIE^0_{p,q}:= F_p^{II} (\Tot (C))_{p+q}/ F_{p-1}^{II} (\Tot (C))_{p+q}=\bigoplus_{j\leq p} C_{p+q-j,j}\slash\bigoplus_{j\leq p-1} C_{p+q-j,j}=C_{q,p} \ .
\]
The differentials are respectively induced by $\dd_v$ and by $\dd_h$.
The first pages of the associated spectral sequences are given by $IE^1_{p,q}=\hh_q(C_{p,*})
$, with induced differential~$\dd^{(2)}_h$, and by $IIE^1_{p,q}=\hh_q(C_{*,p})$, with induced differential~$\dd_v^{(2)}$, simply denoted by $\dd^{(2)}$ in the follow-up, respectively.

\subsection{The Mayer-Vietoris spectral sequence}

Following \cite[Chapter VII.4]{brown} and \cite[Sections I.3.3 \& II.5]{godement}, we briefly recall the construction of the Mayer-Vietoris spectral sequence.

For a simplicial complex $X$, denote by~$P(X)$ the face poset of $X$, \emph{i.e.}~the poset of non-empty simplices of~$X$, ordered by inclusion. 
Let $X_p$ be the set consisting of the $p$-simplices in $X$, and  assume that the set of vertices is always \emph{finite} and \emph{ordered}. The results will not depend on the choice of ordering. 

Let  $\cU=\{U_i\}_{i\in I}$ be a simplicial cover of~$X$, \emph{i.e.}~a family of subcomplexes of $X$ with the property that each $U_i$ is non-empty and $\bigcup_{i\in I}U_i=X$. For each non-empty subset~$J$ of the set of indices~$I$, denote by $U_{J}$ the intersection $\bigcap_{j\in J} U_j$ of the corresponding elements in the cover. From this data, we can associate to $X$ another simplicial complex:

\begin{defn}
Given a simplicial complex $X$ and cover $\cU=\{U_i\}_{i\in I}$, the \emph{nerve}~$\Nerve (\cU)$ is the simplicial complex on the family of non-empty finite subsets $J\subseteq I$, such that $U_{J}\coloneqq \bigcap_{j\in J} U_j \neq \emptyset$.
\end{defn}

Consider the cover $\cU=\{U_i\}_{i\in I}$ of $X$. We can associate to each simplex~$\sigma$ of~$\Nerve (\cU)$ a subset $J$ of $\{ 1, ..., \vert I\vert\}$. 
In particular, for a $p$-simplex $\sigma$ of $\Nerve (\cU)$, defined by the indices~$j_0, \dots, j_p$, we will denote by $U_\sigma$ the intersection $U_{j_0}\cap \dots \cap U_{j_p}$.

Every poset is a category. In particular, this holds for the face poset $P(X)$; its objects are the simplices of $X$, and there is a morphism $\tau\to \sigma$ whenever $\tau$ is a face of $\sigma$. Denote the category obtained this way by $\mathbf{P}(X)$, and by $\mathbf{Ab}$ the category of Abelian groups.

\begin{defn}
A \emph{coefficient system}  on $X$ is a functor~$\cL\colon \mathbf{P}(X)^\op\to \mathbf{Ab}$.
\end{defn}

More concretely, a coefficient system on $X$ is a family of Abelian groups $\{A_{\sigma}\}$, indexed by the simplices~$\sigma$ of $X$, together with a map $A_{\tau\subseteq \sigma}\colon A_{\sigma}\to A_{\tau}$ whenever $\tau $ is a face of $\sigma$, and such that $A_{\tau\subseteq \sigma}\circ A_{\mu\subseteq \tau}=A_{\mu\subseteq \sigma}$ if $\mu\subseteq \tau\subseteq \sigma$.

\begin{example}\label{ex:coeffsys}
Let $\cU$ be a  cover of a simplicial complex~$X$.  For each $q \in \N$  and $\sigma\in \Nerve(\cU) $, define $$\cH_q(\sigma)\coloneqq \hh_q(U_\sigma)$$ as the $q$-th homology group of the subcomplex $U_\sigma$ of $X$. If $\tau$ is a face of $\sigma$ belonging to $\Nerve(\cU)$, then there is an inclusion $U_{\sigma}\subseteq U_{\tau}$, hence an induced map between the associated $q$-homology groups. It is straightforward to check that $\cH_q$ yields a coefficient system on the nerve $\Nerve(\cU)$. Analogously, the group $C_q(U_\sigma)$ of $q$-chains in $U_\sigma$ can be considered; this also yields a coefficient system $\mathcal{C}_q$ on $\Nerve(\cU)$.  
\end{example}

Given a simplicial complex $X$ and a coefficient system~$\cL$ on $X$, it is possible to define the homology groups of $X$ with coefficients in $\cL$, \emph{cf}.~\cite[Section I.3.3]{godement}. For each $n\geq 0$, define the  $p$-chains as the sum
\[
C_p(X;\cL)\coloneqq \bigoplus_{\sigma \in X_p}\cL(\sigma)\ .
\]
If~$\sigma$ is the simplex $[x_0, \dots, x_p]$, then set $d_i(\sigma)\coloneqq [x_0, \dots, \widehat{x_i}, \dots, x_p]$ for its $i$-th face. By functoriality of $\cL$, there are restriction maps 
\[
\cL(\sigma)\to \cL(d_i(\sigma))\hookrightarrow \bigoplus_{\tau \in X_{p-1}}\cL(\tau)
\]
for all $\sigma\in X_p$ and $0\leq i\leq p$, extending to maps  
$\partial_i\colon C_p(X;\cL)\to C_{p-1}(X;\cL)$ on the whole chain complex $C_p(X;\cL)$, for each $i$. 
The differential $\partial\colon C_p(X;\cL)\to C_{p-1}(X;\cL)$ is defined  by setting $\partial \coloneqq \sum_{i=0}^p (-1)^i\partial_i$. This is usually known as \v{C}ech differential, and the resulting chain complex is usually called cosheaf complex (see \emph{e.g.}~\cite[Section~4]{bredon}).

\begin{defn}
The homology of $X$ with coefficients in the coefficient system $\cL$ is the homology of the chain complex $(C_*(X;\cL),\partial)$.
\end{defn} 

We now turn to the construction of the Mayer-Vietoris spectral sequence. This is the spectral sequence associated to a double chain complex, corresponding to a cover of a topological space. For a simplicial complex $X$ and cover $\cU$, set
\begin{equation}\label{eq:E0}
C^0_{p,q} \coloneqq \bigoplus_{\sigma\in \mathrm{N}_p(\cU)} C_q\left(U_\sigma\right)=\bigoplus_{|J|=p+1} C_q\left(U_J\right)
\end{equation}
for  the $\Z$-module 
freely generated by the $q$-chains of the subcomplexes of $X$ obtained considering intersections of $p+1$ elements of the cover~$\cU$. There are two differentials, decreasing either the~$p$ or the $q$ degree. Denote by $\delta^0_h\colon C^0_{p,q}\to C^0_{p-1,q}$ the horizontal differential decreasing the $p$-degree, and by $\delta^0_v\colon C^0_{p,q}\to C^0_{p,q-1}$ the vertical one decreasing the $q$-degree. 
As customary, we define the differential $\delta^0_v$ as the alternating sum 
over the faces: if $\tau=[v_0,\dots, v_q]$, then $\dd^0_v(\tau)\coloneqq \sum_{k=0}^q(-1)^k[v_0,\dots,\widehat{v_k},\dots, v_q]$. This way, for all $p\geq 0$, we get chain complexes $(C^0_{p,*},\dd^0_v)$. In the horizontal direction instead, at a fixed~$q\in\N$, we define $\dd_h^0$ as the differential of the chain complex $C_p(\Nerve(\cU); \mathcal{C}_q)$ of $\Nerve(\cU)$ with coefficients in the coefficient system $\mathcal{C}_q$. More concretely, for each $J$ appearing in the sum of Equation~\eqref{eq:E0}, and each $j\in J$, the subset $J'\coloneqq J\setminus\{j\}$ is a face of $J$ in $\Nerve(\cU)$. The inclusion of $J'$ in $J$ induces a simplicial map $U_{J}\to U_{J'}$ (reversing the ordering), hence a map on the level of $q$-chains. Then, the differential $\dd^0_h$ is defined on the basis elements of $C_q(U_J)$ as the alternating sum of $U_{J\setminus\{j\}}$ over $j \in J$. This definition is then extended to all sums by linearity. 

\begin{rem}
The differentials $\dd_h^0$ and $\dd_v^0$ commute, \emph{i.e.}~$\dd_h^0\circ \dd_v^0=\dd_v^0 \circ \dd_h^0$.
\end{rem}

Endowing the groups $C^0_{p,q}$ with the differentials $\dd_h^0$ and $\dd_v^0$, yields a double chain complex. Both spectral sequences associated to the double chain complex $(C^0_{*,*}, \dd_h^0, \dd_v^0)$ (corresponding to the vertical and horizontal filtration) converge to the homology of the total chain complex~$\Tot X$, since $(C^0_{*,*}, \dd_h^0, \dd_v^0)$ is a first quadrant double chain complex. 
However, even though the two spectral sequences abut to the same graded object $\hh_*(\Tot X)$, they have different $E^\infty$-terms -- seen as bigraded objects. First,  consider the spectral sequence $IE$ obtained by taking homology with respect to the $p$-degree. As, for $s>0$ fixed, the chain complexes~$C_{*,s}^0$ are acyclic~\cite[Section VII.4]{brown}, its $E^1$-page has non-trivial groups~$C_s(X)$ concentrated in the first column. The differential is induced from $\dd^0_v$. Hence, the second page consists of the homology groups $\hh_*(X)$. Therefore, this spectral sequence collapses at the second page, yielding  
\begin{equation}\label{eq:firstisossp}
\hh_*(\Tot X)\cong \hh_*(X) \ .
\end{equation}

The second spectral sequence $IIE$ is called the \emph{Mayer-Vietoris spectral sequence}. From now on, we will simply write $E$ instead of $IIE$. 
\begin{defn}\label{def:1pageMV}
The  first page of the Mayer-Vietoris spectral sequence associated to a simplicial complex $X$ and cover $\cU$ 
is  given by 
\begin{equation}\label{eq:E1}
E^1_{p,q}=\bigoplus_{\sigma\in \mathrm{N}_p(\cU)} \hh_q(U_{\sigma})
\end{equation}
with differential $\dd^{(1)}\colon E^1_{p,q}\to E^1_{p-1,q}$  induced by $\dd^0_h$. 
\end{defn}

\begin{rem}
The group $E^1_{p,q}$  in Equation~\eqref{eq:E1} coincides with the chain group $C_p(\Nerve(\cU);\cH_q)$, where $\cH_q$ is the coefficient system described in Example~\ref{ex:coeffsys}.
\end{rem}

Then, the $E^2$-page of $IIE$ is given by
\[
E^2_{p,q}=\hh_p(E^1_{*,q})=\hh_p(\Nerve(\cU);\cH_q) \ .
\]
As previously remarked, this spectral sequence converges to the homology of the total complex $\Tot X$. Therefore, we get convergence of the Mayer-Vietoris spectral sequence to the homology of $X$ by Equation~\eqref{eq:firstisossp}. 

\begin{rem}
Assume that the elements of the cover of $X$ have homology $\hh_i(U_\sigma)=0$ for all $\sigma$ and $i\geq k$.
Then, the differential $\dd^{i}$ on the $i$-th page must be trivial for $i\geq k+2$. Thus, in such a case, the Mayer-Vietoris spectral sequence converges at the page $E^{k+2}$.
\end{rem}

As a consequence, if each $U_\sigma$ is acyclic, the described spectral sequence collapses at the second page (furthermore, it has non-zero groups only at $q=0$), and we recover the classical nerve lemma -- \emph{cf}.~\cite[Theorem VII (4.4)]{brown}:

\begin{thm}[Nerve Lemma]\label{thm:nervethm}
Let $X$ be a finite simplicial complex, $\cU$ a cover by subcomplexes, and suppose that every non-empty intersection $U_\sigma$ is acyclic. Then, $\hh_*(X)\cong \hh_*(\Nerve(\cU))$.
\end{thm}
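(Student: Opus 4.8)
The plan is to read everything off the Mayer--Vietoris spectral sequence $E$ constructed above, which converges to $\hh_*(\Tot X)$, hence to $\hh_*(X)$ by Equation~\eqref{eq:firstisossp}. So it suffices to show that, under the acyclicity hypothesis, $E$ collapses with $E^\infty_{p,q}$ concentrated in the row $q=0$ and equal there to $\hh_p(\Nerve(\cU))$; the isomorphism $\hh_*(X)\cong\hh_*(\Nerve(\cU))$ will then follow because the induced filtration on $\hh_n(\Tot X)$ has a single non-zero graded piece in each total degree.

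First I would unwind what acyclicity gives on the first page~\eqref{eq:E1}. Each $U_\sigma$ with $\sigma\in\Nerve(\cU)$ is non-empty by definition of the nerve, and acyclicity means $\hh_q(U_\sigma)=0$ for $q\geq 1$ and $\hh_0(U_\sigma)\cong\Z$. Hence $E^1_{p,q}=0$ for $q\geq 1$, while $E^1_{p,0}=\bigoplus_{\sigma\in\mathrm{N}_p(\cU)}\hh_0(U_\sigma)$ is the group of simplicial $p$-chains of $\Nerve(\cU)$ with coefficients in the system $\cH_0$ of Example~\ref{ex:coeffsys}. The key observation is that $\cH_0$ is isomorphic to the constant coefficient system $\underline{\Z}$: since each $U_\sigma$ is acyclic it is in particular connected, so $\hh_0(U_\sigma)\cong\Z$ canonically via the class of a point, and for a face $\tau\subseteq\sigma$ in $\Nerve(\cU)$ the inclusion $U_\sigma\hookrightarrow U_\tau$ of connected subcomplexes induces the identity $\Z\to\Z$ on $\hh_0$. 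Consequently the differential $\dd^{(1)}$ on $E^1_{*,0}$ is precisely the simplicial boundary of $\Nerve(\cU)$, so that $E^2_{p,0}=\hh_p(\Nerve(\cU))$ while $E^2_{p,q}=0$ for $q\geq 1$.

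It then remains to pass from $E^2$ to $E^\infty$ and to $\hh_*(X)$. Since $E^2$ is concentrated in the single row $q=0$, every higher differential $\dd^r$ with $r\geq 2$ has bidegree $(-r,r-1)$ with $r-1\geq 1$ and therefore lands in a zero group; hence $E^2=E^\infty$. For a first-quadrant spectral sequence this forces the filtration on $\hh_n(\Tot X)$ to have $E^\infty_{n,0}$ as its unique non-zero associated graded subquotient in total degree $n$, so there is no extension problem and $\hh_n(X)\cong\hh_n(\Tot X)\cong E^\infty_{n,0}=\hh_n(\Nerve(\cU))$, as desired. The only step that is not pure bookkeeping is the identification of $\cH_0$ with the constant system $\underline{\Z}$ — equivalently, the fact that acyclicity of all finite intersections forces them to be connected and all the comparison maps on $\hh_0$ to be isomorphisms; everything else follows formally from the convergence already established above.
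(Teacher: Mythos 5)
Your proof is correct and follows the same route the paper sketches: observe that acyclicity of the intersections kills all rows $q\geq 1$ of $E^1$, identify the surviving row $E^1_{*,0}$ with the simplicial chain complex of $\Nerve(\cU)$ via $\cH_0\cong\underline{\Z}$, note that a one-row spectral sequence degenerates at $E^2$ with no extension issues, and invoke convergence to $\hh_*(X)$. The paper states this as an immediate consequence of the Mayer--Vietoris spectral sequence (citing Brown) without spelling out the details; you have simply made the argument explicit.
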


When the subcomplexes $U_\sigma$ are not acyclic, the conclusion of the Nerve Lemma does not hold. Nonetheless, the Mayer-Vietoris spectral sequence eventually converges to the homology of $X$.

\begin{rem}\label{rem:cech}
The results outlined in this section are a special case of a more general construction. Let $\cF$ be a sheaf on a topological space $X$.
If $\cU$ is a cover of $X$ which is $\cF$-acyclic (\emph{i.e.}~$\cF$ is acyclic on the finite intersections of $\cU$), then the \v{C}ech cohomology of $\cU$ with coefficients in $\cF$ coincides with the sheaf cohomology of $X$. Furthermore, if $\cU$ consists of two open subsets of $X$, we recover the classical Mayer-Vietoris sequence for the sheaf $\cF$.
\end{rem}

\subsection{The {anti-star} cover}\label{sec:antistar}

For our purposes, it is particularly interesting to consider a special type of covers of simplicial complexes. These covers are obtained from complements of vertex stars, and are commonly known as anti-star covers. Let $X$ be a simplicial complex which is not the standard simplex $\Delta^m$.
 
\begin{defn}\label{defn:astarcomplex}\label{defn:covering}
For each vertex $v$ in $X$, denote by $\mathrm{ast}_X(v)$ the subcomplex of $X$ spanned by the vertices in $V(X)\setminus\{v\}$. The associated cover $\cU^{\mathrm{ast}}_X=\{\mathrm{ast}_X(v)\}_{v\in V(X)}$ is called the \emph{anti-star} cover of $X$.
\end{defn}

Equivalently, each $\mathrm{ast}_X(v)$  is obtained from  $X $ by removing the open star of $v$. When clear from the context, we will drop the dependency on $X$ and simply write $\mathrm{ast}(v)$ and $\cU^{\mathrm{ast}}$. 
Anti-star subcomplexes contain homotopical information about the simplicial complex $X$. Indeed, if the inclusions in the (anti-)star is null-homotopic, if $\mathrm{ast}_X(v)$ is homotopy equivalent to a wedge of $n$-dimensional spheres,  and the link $\mathrm{lk}_X(v)$ is homotopy equivalent to a wedge of $(n-1)$-dimensional spheres, then $X$ is homotopy equivalent to a wedge of $n$-dimensional spheres~\cite[Lemma~5]{Omega}. Furthermore, if $v$ is a (non-isolated) vertex of $X$, there is a Mayer-Vietoris long exact sequence  
\[
\cdots \to \widetilde{\hh}_{i+1} (X)\to \widetilde{\hh}_{i}(\mathrm{lk}_X(v))\to \widetilde{\hh}_{i}(\mathrm{ast}_X(v))\to \widetilde{\hh}_{i}(X)\to \cdots 
\]
relating the (reduced) homology of $X$, of the link of $v$ and of the associated anti-star complex. When multiple vertices are considered at once, long exact sequences are not sufficient to determine the homology of $X$, and the Mayer-Vietoris spectral sequence comes into play.
\begin{rem}
The nerve associated to the anti-star cover can be easily seen to coincide with the standard simplex {$\Delta^m$}.
\end{rem}

In order to showcase some of the techniques used in Section~\ref{sec:applications}, some  sample computations of the Mayer-Vietoris spectral sequence associated to the anti-star cover are provided below.

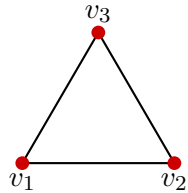
\begin{figure}[h]
\centering
	\begin{tikzpicture}[baseline=(current bounding box.center),line join = round, line cap = round]
		\tikzstyle{point}=[circle,thick,draw=black,fill=black,inner sep=0pt,minimum width=2pt,minimum height=2pt]
		\tikzstyle{arc}=[shorten >= 8pt,shorten <= 8pt,->, thick]
		\def\c{8}\def\d{.5}
		
		\node (e1) at (0,0) {};
		\node (e2) at (2,0) {};
		\node (e3) at (60:2) {};
		\draw[
  thick] (e1.center) -- (e2.center) -- (e3.center) -- (e1.center);
		\node[circle,fill=bunired,scale=0.5] at (e1) {};\node[below] at (e1) {$v_1$};
        \node[circle,fill=bunired,scale=0.5] at (e2) {};\node[below] at (e2) {$v_2$};
        \node[circle,fill=bunired,scale=0.5] at (e3) {};\node[above] at (e3) {$v_3$};
\end{tikzpicture}
	\caption{The simplicial complex from Example~\ref{ex:cycle}.}
	\label{fig:triangle}
\end{figure}

    \begin{figure}
	\begin{tikzpicture}
        \def\y{1}\def\x{3}
        \tikzstyle{sp}=[circle,fill=bunired,scale=0.3]
        \tikzstyle{sptr}=[circle,fill=black,scale=0.15]
        \tikzstyle{se}=[thick, bunired]
        \tikzstyle{setr}=[ black]

        \node (p123) at (0*\x,3*\y){\begin{tikzpicture}[scale=0.3]
        \node[sptr] (1) at (0,0){};
        \node[sptr] (2) at (2,0){};
        \node[sptr] (3) at (60:2){};
        \draw[setr] (1) -- (2);
        \draw[setr] (3) -- (2);
        \draw[setr] (1) -- (3);
        \end{tikzpicture}};
        
        \node (p12) at (-1*\x,2*\y){\begin{tikzpicture}[scale=0.3]
        \node[sp] (1) at (0,0){};
        \node[sptr] (2) at (2,0){};
        \node[sptr] (3) at (60:2){};
        \draw[setr] (1) -- (2);
        \draw[setr] (3) -- (2);
        \draw[setr] (1) -- (3);        \end{tikzpicture}};
        
        \node (p13) at (0*\x,2*\y){\begin{tikzpicture}[scale=0.3]
        \node[sptr] (1) at (0,0){};
        \node[sp] (2) at (2,0){};
        \node[sptr] (3) at (60:2){};
        \draw[setr] (1) -- (2);
        \draw[setr] (3) -- (2);
        \draw[setr] (1) -- (3);        \end{tikzpicture}};
        
        \node (p23) at (1*\x,2*\y){\begin{tikzpicture}[scale=0.3]
        \node[sptr] (1) at (0,0){};
        \node[sptr] (2) at (2,0){};
        \node[sp] (3) at (60:2){};
        \draw[setr] (1) -- (2);
        \draw[setr] (3) -- (2);
        \draw[setr] (1) -- (3);        \end{tikzpicture}};
        
        \node (p1) at (-1*\x,0.7*\y){\begin{tikzpicture}[scale=0.3]
        \node[sp] (1) at (0,0){};
        \node[sp] (2) at (2,0){};
        \node[sptr] (3) at (60:2){};
        \draw[se] (1) -- (2);
        \draw[setr] (1) -- (3);
        \draw[setr] (2) -- (3);
        \end{tikzpicture}};
        
        \node (p2) at (0*\x,1*\y){\begin{tikzpicture}[scale=0.3]
                \node[sp] (1) at (0,0){};
        \node[sptr] (2) at (2,0){};
        \node[sp] (3) at (60:2){};
        \draw[setr] (1) -- (2);
        \draw[setr] (2) -- (3);
                \draw[se] (1) -- (3);

        \end{tikzpicture}};
        
        \node (p3) at (1*\x,0.7*\y){\begin{tikzpicture}[scale=0.3]
        \node[sptr] (1) at (0,0){};
        \node[sp] (2) at (2,0){};
        \node[sp] (3) at (60:2){};
        \draw[setr] (1) -- (3);
        \draw[se] (2) -- (3);
        \draw[setr] (1) -- (2);

        \end{tikzpicture}};
        
        \node (p0) at (0*\x,-0.2*\y){\begin{tikzpicture}[scale=0.3]        \node[sp] (1) at (0,0){};
        \node[sp] (2) at (2,0){};
        \node[sp] (3) at (60:2){};
        \draw[se] (1) -- (3);
        \draw[se] (2) -- (3);
        \draw[se] (2) -- (1);
        \end{tikzpicture}};
        
        \draw[thick, dotted] (p123) -- (p12) -- (p1) -- (p0);
        \draw[thick,dotted] (p123) -- (p23) -- (p2) -- (p0);
        \draw[thick,dotted] (p123) -- (p13) -- (p3) -- (p0);
        \draw[thick,dotted] (p12) -- (p2);
        \draw[thick,dotted] (p23) -- (p3);
        \draw[thick,dotted] (p13) -- (p1);
	\end{tikzpicture}

\caption{The Boolean diagram associated to the nerve of the anti-star cover for $\partial \Delta^2$, with the empty and complete intersections added (top and bottom elements, respectively); red simplices denote the elements $U_i=\mathrm{ast}_X(v_i)$  and their intersections (\emph{cf.}~with Figure~\ref{fig:triang}).}
\label{fig:nvjvjh}
\end{figure}
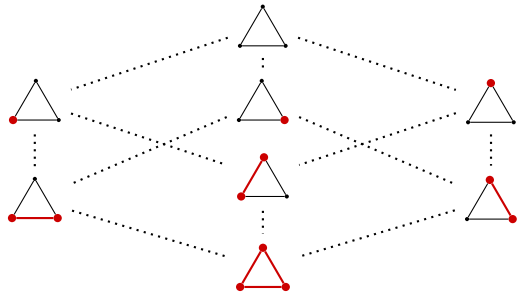

\begin{example}\label{ex:cycle}
Let $X = \partial \Delta^2$, as shown in Figure~\ref{fig:triangle}. Let $U_i=\mathrm{ast}(v_i)$ be the anti-star subcomplexes, so~$U_i=[v_{i+1},v_{i+2}]$, with indices modulo $3$. The intersection $U_i\cap U_{i+1}$ is given by the vertex $v_{i+2}$. Adding to $\Uast$ the empty and the complete intersections as well, produces  the Boolean poset
represented in Figure~\ref{fig:nvjvjh}. Applying the homology functor $\hh_q$ to each element of the poset, yields instead the decorated cube in Figure~\ref{fig:cuboE1}.  The directions of the edges of the cube follow the inclusions; in turn, the $p$-differentials, are directed from the $p$-simplices of the associated nerve to the $(p-1)$-simplices. 

In order to get the double complex $E^0_{p,q}$, for each $q$ take the module generated by the $q$-chains, and then sum them up:
\[
E^0_{2,q}=C_q(\emptyset),\quad E^0_{1,q}=\bigoplus_{i=1}^3 C_q(\{v_i\}), \quad E^0_{0,q}=\bigoplus_{i=1}^3 C_q(\{[v_i,v_{i+1}]\}) \ .
 \]
To turn to the first page,  take the homology in the $q$-direction; for each $q$, this results in  
the row shown at the bottom of Figure~\ref{fig:cuboE1}. 
 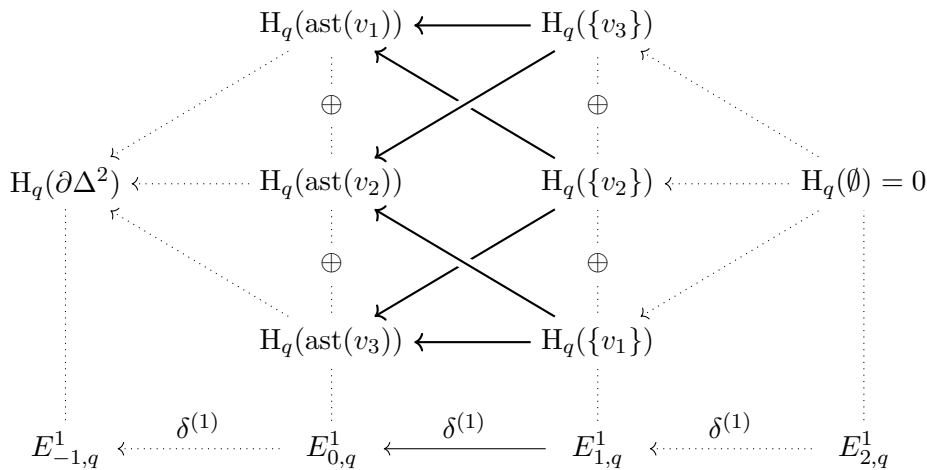
\begin{figure}[ht]
\begin{center}
\begin{tikzpicture}[scale = .7]

\draw[dotted] (0,0) -- (0,-4.5) node[below] {$E^1_{-1,q}$};

\draw[dotted] (5,3) -- (5,-4.5) node[below] {$E^1_{0,q}$};

\draw[dotted] (10,3) -- (10,-4.5) node[below] {$E^1_{1,q}$};

\draw[dotted] (15,0) -- (15,-4.5) node[below] {$E^1_{2,q}$};

\node[below] (uc0) at (0,-4.5) {\phantom{$\UC^0(X)$}};
\node[below] (uc1) at (5,-4.5) {\phantom{$\UC^0(X)$}};
\node[below] (uc2) at (10,-4.5) {\phantom{$\UC^0(X)$}};
\node[below] (uc3) at (15,-4.5) {\phantom{$\UC^0(X)$}};

\draw[dotted, ->] (uc1) -- (uc0) node[midway, above] {$\dd^{(1)}$};
\draw[->] (uc2) -- (uc1) node[midway, above] {$\dd^{(1)}$};
\draw[dotted, ->] (uc3) -- (uc2) node[midway, above] {$\dd^{(1)}$};

\node[fill, white] at (5,1.5){$\oplus$};
\node[fill, white] at (5,-1.5){$\oplus$};
\node  at (5,1.5){$\oplus$};
\node  at (5,-1.5){$\oplus$};

\node[fill, white] at (10,1.5){$\oplus$};
\node[fill, white] at (10,-1.5){$\oplus$};
\node  at (10,1.5){$\oplus$};
\node  at (10,-1.5){$\oplus$};

\node[fill, white] at (0,0) {$\hh_q(X)$};

\node[fill, white] at (5,3){${\Hh(X,(1,0,0))}$};
\node[fill, white] at (5,0){${\Hh(X,(0,1,0))}$};
\node[fill, white] at (5,-3){${\Hh(X,(0,0,1))}$};

\node[fill, white] at (10,3) {${\Hh(X,(1,1,0))}$};
\node[fill, white] at (10,0) {${\Hh(X,(1,0,1))}$};
\node[fill, white] at (10,-3) {${\Hh(X,(0,1,1))}$};

\node[fill, white] at (15,0) {${\Hh(X,(1,1,1))}$};

\node (a) at (0,0) {${\hh_q(\partial \Delta^2)}$};

\node (b1) at (5,3) {$\hh_q(\mathrm{ast}(v_1))$};
\node (b2) at (5,0) {$\hh_q(\mathrm{ast}(v_2))$};
\node (b3) at (5,-3){$\hh_q(\mathrm{ast}(v_3))$};

\node (c1) at (10,3) {$\hh_q(\{v_3\})$};
\node (c2) at (10,0) {$\hh_q(\{v_2\})$};
\node (c3) at (10,-3) {$\hh_q(\{v_1\})$};

\node (d) at (15,0) {$\hh_q(\emptyset)=0$};

\draw[dotted, <-] (a) -- (b1) node[midway,above,rotate =31] {}; 
\draw[dotted, <-] (a) -- (b2) node[midway,above] {}; 
\draw[dotted, <-] (a) -- (b3) node[midway,below,rotate =-29] {}; 

\draw[thick, <-] (b1) -- (c1) node[midway,above] {}; 
\draw[thick, <-] (b1) -- (c2) node[midway,above left,rotate =-29] {}; 

\draw[thick, <-] (b3) -- (c2) node[midway,below left,rotate =31] {}; 
\draw[thick, <-] (b3) -- (c3) node[midway,below] {}; 

\draw[line width = 5, white] (b2) -- (c1) ; 
\draw[line width = 5, white] (b2) -- (c3) ; 
\draw[thick, <-] (b2) -- (c1) node[midway,below left,rotate =31] {} ; 
\draw[thick, <-] (b2) -- (c3) node[midway,above left,rotate =-29] {}; 

\draw[dotted, <-] (c1) -- (d) node[midway,above,rotate =-29] {}; 
\draw[dotted, <-] (c2) -- (d) node[midway,above] {}; 
\draw[dotted, <-] (c3) -- (d) node[midway,below,rotate =31] {}; 
\end{tikzpicture}

\caption{The coefficient system $\cH_q$ on the nerve of $\partial \Delta^2$, augmented by adding the values on the empty and complete intersections. The direct sum, columnwise, yields the $q$-th row of $E^1$ in the Mayer-Vietoris spectral sequence. }\label{fig:cuboE1}
\end{center}
\end{figure}
The only non-trivial row is at $q=0$. 
The unique differential has a non-trivial kernel of rank~$1$. 
Taking the homology again, yields non-trivial homology groups~$E^2_{0,0}, E^2_{1,0}$, both isomorphic to $\Z$; all other groups are zero. Concluding, the Mayer-Vietoris spectral sequence converges at the second page (as all differentials are zero), yielding non-trivial classes in homological dimension $0$ and $1$; this corresponds to the fact that~$X$ is homotopic to~$S^1$.
\end{example}

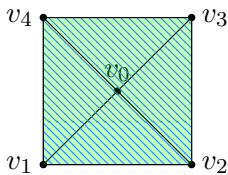
\begin{figure}[ht]
	\begin{tikzpicture}[scale=1.30]
    \tikzstyle{arc}=[shorten >= 3pt,shorten <= 3pt,->, thick]

	\tikzstyle{point}=[circle,thick,draw=black,fill=black,inner sep=0pt,minimum width=2pt,minimum height=2pt]
	\node[point] at (4,0) {};	\node[left] at (4,0) {$v_1$};
 \node[point] at (4.75,.75) {};
 \node[above] at (4.75,.75) {$v_0$};
	\node[point] at (5.5,0) {};
 	\node[right] at (5.5,0) {$v_2$};
	\node[point] at (4,1.5) {};	
 	\node[left] at (4,1.5) {$v_4$};	

	\node[point] at (5.5,1.5) {};
 	\node[right] at (5.5,1.5) {$v_3$};

	\draw[fill=green,opacity=0.25] (4,0) -- (5.5,0) -- (5.5,1.5) -- (4,1.5) --cycle; 
	\draw (4,0)  -- (5.5,0);		
	\draw (4,0)  -- (4,1.5);
	\draw (5.5,0)  -- (5.5,1.5);
	\draw (4,1.5)  -- (5.5,1.5);
	\draw (4,0)  -- (5.5,1.5);
 \draw (4,1.5)  -- (5.5,0);	
	\draw[pattern=north west lines, pattern color=bluedefrance]
 (4,0) -- (5.5,0) -- (5.5,1.5) -- (4,1.5) --cycle; 
	\end{tikzpicture}
 \caption{The cone over a loop of length four.}\label{fig:squarediags}
\end{figure}

\begin{example}\label{ex:sqdiags}
Consider the {contractible} simplicial complex $X$ shown in Figure~\ref{fig:squarediags}, \emph{i.e.}~the cone over a loop of length $4$. 
Then, as the Mayer-Vietoris spectral sequence converges to the homology of the disc, and all the subcomplexes $\mathrm{ast}(v_i)$, but $\mathrm{ast}(v_0)$, are contractible, the first and second page of the Mayer-Vietoris spectral sequence are the following:
\begin{equation*}
    \begin{tikzpicture}
  \matrix (m) [matrix of math nodes,
    nodes in empty cells,nodes={minimum width=5ex,
    minimum height=5ex,outer sep=-5pt},
    column sep=1ex,row sep=1ex]{
          q      &      &     &   &  & E^1\\
          1     &  \Z &  0  & 0 & 0 &\\
          0     &  \Z^5& \Z^{10} &  \Z^{12}  & \Z^{5}&\\
    \quad\strut &   0 &  1  &  2  & 3 &\strut p\\};
\draw[thick] (m-1-1.east) -- (m-4-1.east) ;
\draw[thick] (m-4-1.north) -- (m-4-6.north) ;
\end{tikzpicture}
\quad
    \begin{tikzpicture}
  \matrix (m) [matrix of math nodes,
    nodes in empty cells,nodes={minimum width=5ex,
    minimum height=5ex,outer sep=-5pt},
    column sep=1ex,row sep=1ex]{
          q      &      &     &   &  & E^2\\
          1     &  \Z &  0  & 0 & 0 &\\
          0     &  \Z & 0 &  \Z  & 0 &\\
    \quad\strut &   0 &  1  &  2  & 3 &\strut p\\};
\draw[thick] (m-1-1.east) -- (m-4-1.east) ;
\draw[thick] (m-4-1.north) -- (m-4-6.north) ;
\end{tikzpicture}
\end{equation*}
The differential $\dd^{(2)}\colon E^2_{2,0}\cong \Z\to \Z\cong E^2_{0,1}$ is non-trivial, and it kills the $\Z$-class at $E^2_{0,1}$. The third page contains only the homology of the point.
\end{example}

\begin{example}\label{ex:spheres}
Let $\Delta^{n+1}$ be the standard $(n+1)$-simplex, considered with its standard triangulation. Let $S^n$ be the sphere obtained by removing from $\Delta^{n+1}$ its interior. Then, the associated anti-star cover consists of contractible subcomplexes. Hence, the Mayer-Vietoris spectral sequence converges at the second page. As a consequence, $E^2$ contains a rank one component in degree $(0,0)$, and one  in degree $(n,0)$.
\end{example}

\section{The overlap between Mayer-Vietoris and \"uberhomology}

The aim of this section is to prove Theorem~\ref{thm:uber=MV}; or, more explicitly, to provide the identification between the second page of the Mayer-Vietoris spectral sequence associated to the \emph{anti-star cover} and the $0$-degree \"uberhomology. To this end, the first step is to extend the computational framework of the Mayer-Vietoris spectral sequence to include the empty intersection of the elements of the cover as well.

Let $X$ be a simplicial complex, and let $\Uast$ be its
associated anti-star cover. Assume that $X$ is not the standard simplex. By Equation~\eqref{eq:E1}, the first page of the associated Mayer-Vietoris spectral sequence is  $$E^1_{p,q}=\bigoplus_{\sigma\in \mathrm{N}_p(\cU^{ast})} \hh_q(U_{\sigma}) \ .$$  In order to include the empty intersection $U_\emptyset\coloneqq X$, it is possible to augment both the double chain complex $E^0_{p,q}$ and the first page of the spectral sequence in degree $p=-1$ with the homology of $X$, 
by setting
\[
E^1_{-1,q}\coloneqq \hh_q(X) \ .
\] 
The horizontal differential of the first page, induced by inclusions, naturally extends to the $(-1)$-column as well.

\begin{defn}
The \emph{augmented Mayer-Vietoris spectral sequence} of $X$ relative to a cover $\cU$ is the spectral sequence with $E^1$-page given by $E^1_{p,q}=\bigoplus_{\sigma\in \mathrm{N}_p(\cU^{ast})} \hh_q(U_{\sigma})$, augmented in degree~$-1$ with $E^1_{-1,q}\coloneqq \hh_q(X)$, and differentials induced by inclusions. \end{defn}

\begin{rem}
Consider the double chain complex  $E^0_{p,q}$, augmented in degree $p=-1$ with   $C_q(X)$; \emph{i.e.}~set $E^0_{-1,q}\coloneqq C_q(X)$.   As the rows of such augmented complex are exact~\cite[Section VII.4]{brown},  
by the Acyclic Assembly Lemma~\cite[Lemma~2.7.3]{Weibel}, the total complex associated to~$E^0_{*,*}$ is acyclic. Hence, the augmented Mayer-Vietoris spectral sequence associated to (the second filtration of) $E^0_{*,*}$ converges to an acyclic complex. 
\end{rem}

We provide some examples,  extending those already discussed in Section~\ref{sec:antistar}.

\begin{example}\label{ex:cycle2}
In parallel with Example~\ref{ex:cycle},  consider the spectral sequence obtained by augmenting the first page in degree $-1$ with the homology of $X$.  The first and second page now become the following:
\begin{equation*}
\begin{tikzpicture}
  \matrix (m) [matrix of math nodes,
    nodes in empty cells,nodes={minimum width=5ex,
    minimum height=5ex,outer sep=-5pt},
    column sep=1ex,row sep=1ex]{
          q      &      &     &     & E^1\\
          1     &  \Z &  0  & 0 & \\
          0     &  \Z  & \Z^3 &  \Z^3  & \\
    \quad\strut &   -1  &  0  &  1  & \strut p\\};
\draw[thick] (m-1-1.east) -- (m-4-1.east) ;
\draw[thick] (m-4-1.north) -- (m-4-5.north) ;
\end{tikzpicture}
\quad
\begin{tikzpicture}
  \matrix (m) [matrix of math nodes,
    nodes in empty cells,nodes={minimum width=5ex,
    minimum height=5ex,outer sep=-5pt},
    column sep=1ex,row sep=1ex]{
          q      &      &     &     & E^2\\
          1     & \node(b) {\Z}; &  0  & 0 & \\
          0     &  0  & 0 & \node(a) {\Z};  & \\
    \quad\strut &   -1  &  0  &  1  & \strut p\\};
\draw[thick] (m-1-1.east) -- (m-4-1.east) ;
\draw[thick] (m-4-1.north) -- (m-4-5.north) ;
\end{tikzpicture}
\end{equation*}
The unique differential $\dd^{(2)}\colon E^2_{1,0}\to E^2_{-1,1}$ is an isomorphism; hence, the third page is trivial. 
Analogously, for the square of Example~\ref{ex:sqdiags}, the second page is completely trivial, except for the bidegrees $(0,1)$ and $(2,0)$, where it is $\Z$.
\end{example}

\begin{example}
Consider the spheres from Example~\ref{ex:spheres}.  It is easy to see that the second page of the augmented Mayer-Vietoris spectral sequence is completely trivial, except for in degrees $(-1, n)$ and $(n, 0)$. The augmented spectral sequence collapses at page $n+1$. \end{example}

We can now proceed with the proof of Theorem~\ref{thm:uber=MV};

\begin{proof}[Proof of Theorem~\ref{thm:uber=MV}]
For a given set of vertices $\{v_{i_1},\dots, v_{i_k}\} \subseteq V(X)$,  the induced subcomplex $$X \langle v_{i_1},\dots, v_{i_k}\rangle \subseteq X$$ they span can be regarded as the ``$1$-coloured'' component of the complex $(X,\e)$, $\e$ being the bicolouring on $X$ assigning $1$ to each $v_{i_j}$ and $0$ otherwise. 
Equivalently, $X \langle v_{i_1},\dots, v_{i_k}\rangle$  is obtained by intersecting the anti-star subcomplexes $U_s$ for all $s\notin \{i_1, \dots, i_k\}$.
    
Now observe that for a fixed $q\geq 0$, by the definition of the chains in Equation~\eqref{eq:uberchain},  restricted to the $0$-degree, 
we obtain an identification of the groups  $E^1_{p,q}$ with $\bigoplus_{\ell(\e)=m-p - 1} \hh_q(X,\e)$; this is the \"uberhomological degree $m-p -1$ component of the $0$-degree of the \"ubercomplex. When restricting to the base field $R=\Z_2$, the  $p$-differential coincides with the differential in Equation~\eqref{eq:uberdiff}. Furthermore, the agreement of the differentials extends  to a general ring of coefficients~$R$ after choosing a sign assignment on the appropriate Boolean poset~\cite{primo}; the agreement does not depend on the choice of the sign assignment by Proposition~\ref{rem:ubergencoeff} and \cite[Theorem~3.16 \& Corollary~3.18]{primo}. 
By Definition~\ref{def:uber}, the homology of this chain complex is the \"uberhomology of $X$. On the other hand, it yields the second page of the augmented Mayer-Vietoris spectral sequence. This gives the complete identification~$E^2_{p,q}\cong \BH^{m-p-1}_q(X)$ for $p\geq -1$ and $q\geq 0$.
\end{proof}

In other words, the $0$-degree \"uberhomology of $X$ coincides with the homology of the nerve of the anti-star cover, with coefficients in the functor $\cH_*$  defined in Example~\ref{ex:coeffsys}.
Observe that the definition of anti-star cover can be extended verbatim to regular CW-complexes. 
Furthermore, Theorem~\ref{thm:uber=MV} allows us to extend also the definition of \"uberhomology to regular CW-complexes. This will be used in Example~\ref{ex:polyngb} and Corollary~\ref{cor:chordalchar}.

\begin{cor}\label{cor:diff}
The \"uberhomology groups $\BH^j_i (X)$
inherit a further differential 
\[
\dd^{(2)}\colon \BH^j_i (X)\to \BH^{j+2}_{i+1} (X) 
\]
for all $i\geq 0$ and $0\leq j\leq m=V(X)$. Hence, $(\BH^j_i (X), \dd^{(2)})$ is a chain complex.
\end{cor}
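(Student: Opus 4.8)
The statement asserts that the differential $\dd^{(2)}$ on the second page of the (augmented) Mayer-Vietoris spectral sequence transports, under the isomorphism of Theorem~\ref{thm:uber=MV}, to a differential on $0$-degree \"uberhomology. The plan is simply to transport the existing structure along the isomorphism established in Theorem~\ref{thm:uber=MV}, checking that the bidegrees match up.

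First I would recall that the Mayer-Vietoris spectral sequence $E$ is a genuine first-quadrant (augmented) spectral sequence, so for every $r\geq 2$ it comes equipped with a differential $d^r\colon E^r_{p,q}\to E^r_{p-r,q+r-1}$ satisfying $d^r\circ d^r=0$; in particular for $r=2$ we have $\dd^{(2)}\colon E^2_{p,q}\to E^2_{p-2,q+1}$ with $(\dd^{(2)})^2=0$. Next I would invoke Theorem~\ref{thm:uber=MV}, which gives isomorphisms $\phi_{p,q}\colon E^2_{p,q}\xrightarrow{\ \cong\ }\BH^{m-p-1}_q(X)$ for $p\geq -1$, $q\geq 0$. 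Under this identification, set $j=m-p-1$ and $i=q$; then the target $E^2_{p-2,q+1}$ corresponds to $\BH^{m-(p-2)-1}_{q+1}(X)=\BH^{j+2}_{i+1}(X)$. Defining $\dd^{(2)}\colon \BH^j_i(X)\to \BH^{j+2}_{i+1}(X)$ as $\phi_{p-2,q+1}\circ \dd^{(2)}\circ \phi_{p,q}^{-1}$ yields a map of the asserted bidegree, and $(\dd^{(2)})^2=0$ is immediate since $\phi$ is an isomorphism and the spectral-sequence differential squares to zero. The bookkeeping on the index range is the only thing to verify: $j$ ranges over $0\leq j\leq m$ exactly as $p$ ranges over $-1\leq p\leq m-1$, and the augmentation (inclusion of $U_\emptyset=X$ in degree $p=-1$) is precisely what makes the top \"uberhomological degree $j=m$ part of the picture, so the differential is defined on the full range claimed.

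There is essentially no obstacle here: the corollary is a formal consequence of Theorem~\ref{thm:uber=MV} together with the standard fact that a spectral sequence carries differentials on every page. The only point requiring a word of care is that one should say which page's differential is being used and confirm it is nontrivial in general (Examples~\ref{ex:sqdiags} and~\ref{ex:cycle2} already exhibit a nonzero $\dd^{(2)}$), but for the statement as given — merely that $(\BH^j_i(X),\dd^{(2)})$ is a chain complex — one only needs existence of the map and that it squares to zero. I would therefore present the proof as: the spectral sequence differential $\dd^{(2)}$ on $E^2$ has bidegree $(-2,1)$; transporting it through the isomorphism of Theorem~\ref{thm:uber=MV}, which sends $E^2_{p,q}$ to $\BH^{m-p-1}_q(X)$, turns the bidegree $(-2,1)$ in $(p,q)$-coordinates into the bidegree $(+2,+1)$ in $(j,i)=(m-p-1,q)$-coordinates; since $\dd^{(2)}$ squares to zero and the transported map inherits this, $(\BH^j_i(X),\dd^{(2)})$ is a chain complex, with the index range $0\leq j\leq m$, $i\geq 0$ coming from the range $-1\leq p\leq m-1$, $q\geq 0$ of the augmented spectral sequence. \qed
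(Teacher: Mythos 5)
Your proof is correct and follows exactly the same route as the paper: the paper's own proof is the one-line observation that $\dd^{(2)}$ on $\BH^j_i(X)$ is precisely the second-page differential of the augmented Mayer--Vietoris spectral sequence, transported through the isomorphism of Theorem~\ref{thm:uber=MV}. You have merely spelled out the bidegree bookkeeping, the square-zero property, and the index range explicitly, all of which are accurate.
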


 \begin{proof}
The differential $\dd^{(2)}\colon \BH^j_i (X)\to \BH^{j+2}_{i+1} (X) $ is precisely the differential $\dd^{(2)}$ of the second page~$E^2_{*,*}$ of the augmented Mayer-Vietoris spectral sequence.
 \end{proof}

Note that the induced differential~$\dd^{(2)}$ is related to  the connecting homomorphism in the Mayer-Vietoris  long exact sequence. Furthermore, the transgression of the augmented Mayer-Vietoris spectral sequence induces a (partially defined) map
\[
\tau\colon \mathbb{H}^{k}(X)=\BH_{0}^{k}(X)\longrightarrow \BH_{m-k-1}^{m}(X) \ ,
\]
where $m=|V(X)|$ and $k=0,\dots, m-2$. 

As a consequence of Theorem~\ref{thm:uber=MV}, the next computations of $0$-degree \"uberhomology groups follow;

\begin{example}
The homology groups $\BH^j_i (X)$ of the simplicial complex in Example~\ref{ex:cycle} are all zero, except for $\BH^3_1 (X)$ and $\BH^1_0 (X)$, both isomorphic to $\Z$. The two classes are paired by the differential $\dd^{(2)}\colon \BH^1_0 (X)\to \BH^{3}_{1} (X) $, and the bold homology class is paired with the fundamental class of $X$.  In this case, $\delta^{(2)}$ at $\BH^1_0 (X)$ agrees with the transgression~$\tau$, and is an isomorphism.   Consider the square from Example~\ref{ex:sqdiags}; it has non-trivial homology groups~$\BH^4_1 (X)$ and $\BH^2_0 (X)$; these are again paired by the differential (the transgression) $\dd^{(2)}$, which is an isomorphism. 
 \end{example}

 \begin{example}
The only non-trivial $0$-degree \"uberhomology groups of the standard spheres $\partial\Delta^m$ of Example~\ref{ex:spheres} are $\BH^{m+1}_{m-1} (\partial\Delta^m)$ and $\BH^1_0 (\partial\Delta^m)$. Note that, in such case, the bold homology class and the class of $\BH^{m+1}_{m-1} (\partial\Delta^m)$ are still paired, but by higher differentials in the augmented spectral sequence. However, the transgression map still yields an isomorphism between the groups~$\BH^{m+1}_{m-1} (\partial\Delta^m)$ and $\BH^1_0 (\partial\Delta^m)$.
\end{example}

 \begin{figure}[ht]
	\begin{tikzpicture}[scale=1.3]
	\tikzstyle{point}=[circle,thick,draw=black,fill=black,inner sep=0pt,minimum width=2pt,minimum height=2pt]
	\node[point] at (2,0) {};
     \node[below] at (2,0) {$v_0$};

    \node[point] at (1,1) {};
    \node[left] at (1,1) {$v_1$};

	\node[point] at (3,1) {};	
     \node[right] at (3,1) {$v_2$};

	\node[point] at (2,2) {};
	    \node[above] at (2,2) {$v_3$};

	\draw[fill=green,opacity=0.25] (2,0) -- (1,1) -- (2,2) -- (3,1) --cycle; 
	\draw (2,0)  -- (1,1);		
	\draw (1,1)  -- (2,2);
	\draw (2,2)  -- (3,1);
	\draw (3,1)  -- (2,0);
	\draw (1,1)  -- (3,1);
	\draw[pattern=north west lines, pattern color=bluedefrance]
(2,0) -- (1,1) -- (2,2) -- (3,1) --cycle; 
	\end{tikzpicture}
 \caption{Suspension of the $1$-simplex.}\label{fig:gluedtriangles}
\end{figure}
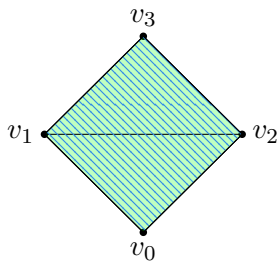

\begin{example}
Consider the simplicial complex $X$ obtained from two $2$-simplices, glued together along one edge, with vertices as in Figure~\ref{fig:gluedtriangles}. Call $v_0, v_3$ the external vertices and $v_1, v_2$ the vertices of the common edge. Then, the subcomplexes $\mathrm{ast}_X(v_i)$ and all the possible intersections, except for  $\mathrm{ast}_X(v_1)\cap \mathrm{ast}_X(v_2)$, are contractible. The subcomplex $\mathrm{ast}_X(v_1)\cap \mathrm{ast}_X(v_2) = \{ v_0, v_3\}$ is disconnected. 
The spectral sequence converges at the second page, where it is completely trivial. Hence, the homology groups $\BH^j_i (X)$ of $X$ are all zero.  
\end{example}

\section{Applications}\label{sec:applications}

In this section, we provide some consequences and applications of Theorem~\ref{thm:uber=MV}. First, we recall the definition of $d$-Leray complexes. 

\begin{defn}\label{def:leray}
A CW-complex $X$ is \emph{$d$-Leray} if the reduced homology of all induced subcomplexes of $X$ is trivial for all $i\geq d$. A cover $\cU$ of $X$ is $d$-Leray if all its elements  are $d$-Leray.
\end{defn}

Equivalently, by \cite[Proposition~3.1]{kalai}, a simplicial complex~$X$ is $d$-Leray if $\widetilde{\hh}_i(\mathrm{lk}_X(\sigma))=0$ for all simplices~$\sigma$ in $X$ and $i\geq d$. The property of being $d$-Leray has consequences on the convergence of the Mayer-Vietoris spectral sequence. For example, the following result can  be readily deduced from Theorem~\ref{thm:uber=MV}:

\begin{prop}\label{prop:gentrees}
Let $X\neq \Delta^m$ be a connected, contractible, simplicial complex. If the anti-star cover of $X$ is $1$-Leray, then all the homology groups $\BH^j_i (X)$ are zero.
\end{prop}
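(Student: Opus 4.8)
The plan is to deduce the vanishing of all $\BH^j_i(X)$ from Theorem~\ref{thm:uber=MV} by analysing the convergence of the augmented Mayer-Vietoris spectral sequence $E$ associated to the anti-star cover. First I would record two consequences of the hypotheses. Since $X$ is contractible, the augmented Mayer-Vietoris spectral sequence converges to an acyclic complex (equivalently, $\hh_*(\Tot X)\cong\widetilde{\hh}_*(X)=0$, using that the augmented total complex is acyclic by the Acyclic Assembly Lemma, as recorded in the remark after the definition of the augmented spectral sequence). Since the anti-star cover is $1$-Leray, every non-empty intersection $U_\sigma$ has $\widetilde{\hh}_i(U_\sigma)=0$ for $i\geq 1$, and the augmentation term $E^1_{-1,q}=\hh_q(X)$ also vanishes for $q\geq 1$ by contractibility. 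Hence the first page $E^1_{p,q}$ is concentrated in the two rows $q=0$ and (at most) $q=1$; in fact in row $q=1$ only the unaugmented part $p\geq 0$ can be non-zero, and in row $q=0$ every $U_\sigma$ as well as $X=U_\emptyset$ is non-empty and connected (here one uses that $X$ is connected and that anti-stars of a connected complex, and their intersections appearing in the nerve, are non-empty), so $E^1_{p,0}=\bigoplus_{\sigma\in\mathrm{N}_p}\hh_0(U_\sigma)$.

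Next I would run the convergence argument. Because $E^1$ is supported in rows $q=0,1$, the only possibly non-zero differential beyond the first page is $\dd^{(2)}\colon E^2_{p,0}\to E^2_{p-2,1}$, and the spectral sequence collapses at $E^3$. Convergence to the zero graded module then forces $E^3_{p,q}=0$ for all $p,q$, i.e. $\dd^{(2)}$ is an isomorphism $E^2_{p,0}\xrightarrow{\ \cong\ }E^2_{p-2,1}$ onto all of $E^2_{p-2,1}$ and has trivial kernel, for every $p$. In particular \emph{both} rows of $E^2$ are killed: $E^2_{p,1}=0$ for all $p$ (each is hit isomorphically), and $E^2_{p,0}=0$ for all $p$ (each injects into a group we have just shown is zero). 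Together with the vanishing $E^2_{p,q}=0$ for $q\geq 2$ already observed on $E^1$, this gives $E^2_{p,q}=0$ for all $p,q$.

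Finally I would translate back via Theorem~\ref{thm:uber=MV}: the isomorphism $E^2_{m-j-1,i}\cong\BH^j_i(X)$ (valid for $0\le j\le m$, $i\ge 0$, and extended to general coefficients as in the proof of that theorem) immediately yields $\BH^j_i(X)=0$ for all such $j$ and $i$, which is the claim. The main obstacle is not any single estimate but making the row-concentration argument airtight: one must be sure that, under the $1$-Leray hypothesis plus contractibility, \emph{every} entry of $E^1$ outside rows $0$ and $1$ vanishes including the augmentation column, and that the row $q=1$ entries sit only in degrees $p\geq 0$ so that $\dd^{(2)}$ really is the last differential; granting that, the two-row spectral sequence degeneration is routine. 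A secondary point to check is that the identification of $\dd^{(2)}$ on $\BH$ in Corollary~\ref{cor:diff} matches the spectral-sequence $\dd^{(2)}$, so that the statement is genuinely about the \"uberhomology groups; this is exactly Corollary~\ref{cor:diff}, so no extra work is needed.
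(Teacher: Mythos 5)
Your proof reaches the correct conclusion via essentially the same ingredients as the paper, but the ``two-row spectral sequence'' step in the middle contains a genuine logical flaw. From $E^3=E^\infty=0$ for a spectral sequence concentrated in rows $q=0,1$ one may indeed conclude that $\dd^{(2)}\colon E^2_{p,0}\to E^2_{p-2,1}$ is an isomorphism for every $p$; but from this it does \emph{not} follow that $E^2_{p,1}=0$ ``because each is hit isomorphically.'' An isomorphism onto $E^2_{p-2,1}$ does not force $E^2_{p-2,1}$ to vanish --- for instance, the bounded two-row complex with $E^2_{2,0}=E^2_{0,1}=\Z$ and $\dd^{(2)}$ the identity has $E^\infty=0$ but $E^2\neq 0$. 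Consequently the follow-up claim ``$E^2_{p,0}=0$ because each injects into a group we have just shown is zero'' rests on an unjustified premise, and the argument as written does not establish $E^2=0$.

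The irony is that you have already supplied the correct justification earlier in the paragraph: the $1$-Leray hypothesis gives $\widetilde{\hh}_i(U_\sigma)=0$ for \emph{all} $i\geq 1$, so $E^1_{p,1}=0$ for $p\geq 0$, and contractibility of $X$ gives $E^1_{-1,1}=0$ as well. Thus the entire $q=1$ row of $E^1$ (hence of $E^2$) vanishes, and the spectral sequence is in fact concentrated in the single row $q=0$. There is then no phantom second row to worry about: every $\dd^r$ with $r\geq 2$ raises $q$ by $r-1\geq 1$ and therefore lands in zero, so $E^2_{p,0}=E^\infty_{p,0}=0$ by acyclicity of the augmented total complex, and Theorem~\ref{thm:uber=MV} yields $\BH^j_i(X)=0$. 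This one-row argument is exactly what the paper's proof uses, and your write-up should replace the flawed two-row step with it. (The appeal to Corollary~\ref{cor:diff} at the end is unnecessary: the statement concerns the vanishing of the groups $\BH^j_i(X)$ themselves, for which the identification with $E^2$ from Theorem~\ref{thm:uber=MV} suffices.)
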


\begin{proof}
If the anti-star cover is $1$-Leray, then the first page of the augmented Mayer-Vietoris spectral sequence has non-trivial elements only for $q=0$. The spectral sequence converges to zero at the second page, since $X$ is contractible. 
\end{proof}

This proposition provides a generalization of~\cite[Theorem~5.1]{domination}: the bold homology of a connected tree with at least $3$ vertices is trivial. Indeed, in this case, the anti-star cover is $1$-Leray, and the bold homology is the $0$-part of $\BH^*_* (X)$. Another example is given by ``polygonal neighbourhoods of trees'':

\begin{example}\label{ex:polyngb}
 Let $\tT$ be a tree and choose an abstract full polygon $P$ (\emph{i.e.}~a $2$-cell) on at least~$n$ edges, where $n=\max_{v\in \tT} \deg(v)+2$. Then, for each vertex $v$ of $\tT$ take a copy $P^v$ of $P$ at~$v$; for each edge~$(v,w)$, glue $P^v$ and $P^w$ along a free face. 
As in $\tT$ there are no cycles, the construction yields a contractible CW-complex $X(\tT)$ (it is a simplicial complex only when $P$ is a triangle). Note that the construction may yield different complexes, even for a fixed choice of $P$. In any case, the anti-star cover of $X(\tT)$ is $1$-Leray.  Hence,  if $\tT\neq \Delta^1$, all the groups~$E^2_{*,*}(X(\tT))$ of the associated augmented Mayer-Vietoris spectral sequence are trivial -- by the same arguments of Proposition~\ref{prop:gentrees}. Consider now the $1$-skeleton of $X(\tT)$, which is a connected graph, and   has homology in dimension $1$ of rank $|V(\tT)|$. The anti-star cover associated to such graph is not $1$-Leray any more. However, observe that the $(q=0)$-row of the augmented Mayer-Vietoris spectral sequence is still trivial (whether the polygons are boundary of $2$-cells does not affect the number of connected components, nor the induced $p$-differentials). Therefore, the bold homology of polygonal neighbourhoods of trees ($\neq \Delta^1$)   is completely trivial.
\end{example}

Denote by $\tI_m$  the path graph on $m$ vertices, and by $\square$ the Cartesian product of graphs.

\begin{cor}\label{cor:grids}
If $m\geq 3$, then all graph products $\tI_m\square \tI_2$ have trivial bold homology.
\end{cor}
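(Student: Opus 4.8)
The plan is to exhibit $\tI_m\square\tI_2$ as the $1$-skeleton of a contractible regular CW-complex with $1$-Leray anti-star cover, and then to invoke Proposition~\ref{prop:gentrees} (in the CW-version used in Example~\ref{ex:polyngb}) together with Lemma~\ref{lem:flaggraphs}. Concretely, let $X_m$ be the complex obtained by gluing $m-1$ square $2$-cells in a row along edges, so that $X_m$ is the polygonal neighbourhood of the path $\tI_{m-1}$ with $P$ the square, realised as the rectangle $[0,m-1]\times[0,1]$ tiled by the unit squares $S_k=[k,k+1]\times[0,1]$. A direct count gives $2m$ vertices and $3m-2$ edges, and one checks that $X_m^{(1)}=\tI_m\square\tI_2$, the cells $S_k$ filling exactly the unit $4$-cycles of the ladder. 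Since $X_m$ is a convex planar region it is contractible, and it is visibly not a simplex, so all the hypotheses of Proposition~\ref{prop:gentrees} are in force once the Leray condition is checked.

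The heart of the argument is the claim that $X_m$ is $1$-Leray, i.e. $\widetilde{\hh}_i(X_m\langle W\rangle)=0$ for every vertex set $W$ and every $i\geq 1$; since each $\mathrm{ast}_{X_m}(v)$, and each of its own induced subcomplexes, is again an induced subcomplex of $X_m$, this gives that the anti-star cover of $X_m$ is $1$-Leray (this is exactly the assertion behind the corresponding claim in Example~\ref{ex:polyngb}). For $i\geq 2$ there is nothing to prove: $X_m\langle W\rangle$ is a $2$-dimensional subcomplex of the disc $X_m$, and as $\hh_2(X_m)=0$ with no $3$-cells around, any $2$-cycle of $X_m\langle W\rangle$ is a $2$-cycle of $X_m$, hence $0$. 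For $i=1$ I would show directly that every $1$-cycle $z$ of $X_m\langle W\rangle$ already bounds in $X_m\langle W\rangle$: since $\hh_1(X_m)=0$ we may write $z=\partial c$ with $c=\sum_k n_k S_k$ a $2$-chain of $X_m$, and $c$ is unique because $\hh_2(X_m)=0$. Now each square $S_k$ has a horizontal edge lying on the boundary of the rectangle, hence contained in no other square, so the coefficient of that edge in $\partial c$ is $\pm n_k$; thus whenever $n_k\neq 0$ this edge occurs in $z$, which is a chain of $X_m\langle W\rangle$, forcing its two endpoints into $W$, and repeating with the opposite horizontal edge of $S_k$ puts all four vertices of $S_k$ in $W$. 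Hence every $S_k$ with $n_k\neq 0$ is a cell of $X_m\langle W\rangle$, so $c$ is a $2$-chain of $X_m\langle W\rangle$ and $z=\partial c$ is a boundary there.

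With the Leray condition established, the (CW-version of the) argument of Proposition~\ref{prop:gentrees}, applied to the contractible complex $X_m$ exactly as in Example~\ref{ex:polyngb}, gives $\BH^j_i(X_m)=0$ for all $i,j$; in particular $\mathbb{H}^*(X_m)=\BH^*_0(X_m)=0$. By Lemma~\ref{lem:flaggraphs} (whose proof extends verbatim to regular CW-complexes, since bold homology only records the connected components of induced subcomplexes) we conclude $\mathbb{H}^*(\tI_m\square\tI_2)=\mathbb{H}^*(X_m^{(1)})\cong\mathbb{H}^*(X_m)=0$, which is the claim. I expect the only genuinely delicate point to be the $1$-Leray verification — more precisely the bookkeeping showing that a square enclosed by a cycle of $X_m\langle W\rangle$ must have all of its vertices in $W$ — which is precisely where one uses that in the $2\times(m-1)$ grid every vertex lies on the outer boundary; everything else is either a count or a direct citation of the results above. (For $m\geq 4$ one may alternatively apply Example~\ref{ex:polyngb} verbatim, since then $\tI_{m-1}\neq\Delta^1$; the argument above has the advantage of covering $m=3$ uniformly.)
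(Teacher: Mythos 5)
Your proof is correct and follows essentially the same route as the paper: realise $\tI_m\square\tI_2$ as the $1$-skeleton of a strip $X_m$ of $m-1$ square $2$-cells, show the anti-star cover of $X_m$ is $1$-Leray so that the CW-version of Proposition~\ref{prop:gentrees} kills $\ddot{\rm B}^*_*(X_m)$, and transfer back to the $1$-skeleton via Lemma~\ref{lem:flaggraphs} (equivalently, via the remark in Example~\ref{ex:polyngb} that passing to the $1$-skeleton does not change the $q=0$ row of the spectral sequence). The paper disposes of all this simply by citing Example~\ref{ex:polyngb}, where the $1$-Leray assertion is stated without proof and where the case $\tT=\Delta^1$ (i.e.\ $m=3$ in your indexing) is formally excluded; your explicit verification of the $1$-Leray condition --- uniqueness of the bounding $2$-chain together with the observation that in the $2\times m$ grid every vertex lies on the outer boundary --- is exactly the point the paper leaves implicit, and it has the merit of covering $m=3$ uniformly. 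A minor remark: the paper's proof refers to the ``polygonal neighbourhood of $\tI_m$'', but the vertex count shows this should be $\tI_{m-1}$, as you have it.
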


\begin{proof}
Note that the product $\tI_m\square \tI_2$ can be seen as a polygonal neighbourhood of $\tI_m$, by choosing the polygon~$P$ to be a square. The product $\tI_m\square \tI_2$ is not contractible, nor the anti-star cover is $1$-Leray. However, the same  reasoning as in Example~\ref{ex:polyngb} shows that the graph $\tI_m\square \tI_2$ has trivial bold homology. 
\end{proof}

As a consequence, the connected domination polynomial of $\tI_m\square \tI_2$, evaluated at $-1$, is trivial.   We point out that, in general, counting connected dominating sets for grids is not straightforward; see also~\cite{Srinivasan, goto2021connected}.

A priori, one can use properties of the anti-star cover of closed manifolds to deduce properties of their $1$-skeleton's bold homology.
For instance, assume that $M$ is a closed connected  non-orientable $n$-manifold.  Consider a $m$-vertex triangulation $T$ of $M$. If the anti-star cover is $1$-Leray, then the first page of the augmented Mayer-Vietoris spectral sequence $E^1_{p,q}$ is zero for all $p\geq 0$ and $q\geq 1$, whereas  $E^{1}_{-1,n-1} = H_{n-1}(M;\Z) \cong \Z_2$, see~\cite[Corollary~3.28]{hatcher}. The spectral sequence converges  to an acyclic total complex, yielding an isomorphism:
\[
\dd^{(n)}\colon E^n_{n-1,0}\longrightarrow E^n_{-1,n-1}\cong \Z_2 \ .
\]
Since $E^2_{n-1,0}\cong E^n_{n-1,0}$, then $\BH^{m}_{n-1}(M;\Z)\cong \mathbb{H}^{m-n}(M;\Z)\cong \Z_2$, and the bold homology would contain torsion. Unfortunately, the anti-star cover of a non-orientable closed connected $n$-manifold is $n$-Leray. Therefore, we can not infer the existence of torsion classes in bold homology. 

\begin{q}\label{question:torsion}
Is it possible to find torsion in the integral bold homology of a simplicial complex?
\end{q}

Given a simple graph $\tG$, denote by $\Fl(\tG)$ its flag complex, \emph{i.e.}~the simplicial complex whose simplices are given by the complete subgraphs of $\tG$. As remarked in \cite[Section~3.2]{MR3205172}, the only $0$-Leray complexes are the standard simplices~$\Delta^n$, whereas $1$-Leray complexes are flag complexes on chordal graphs.  An example of $2$-Leray and $3$-Leray complexes is given by flag complexes of line graphs of complete bipartite graphs, and complete graphs, respectively~\cite[Theorem~1.1]{HOLMSEN2022105618}. A class generalising bipartite graphs is given by triangle-free graphs.

\begin{prop}\label{prop:triangle_free}
Let $\tG$ be a connected triangle-free graph on $m$ vertices. Then, 
\begin{enumerate}[label= {\rm (\roman*)}]
\item $\mathbb{H}^{m}(\tG;\Z)=\mathbb{H}^{m-1}(\tG;\Z)=0$;
\item $\mathbb{H}^{m-2}(\tG;\Z)$ and $\BH^{m}_1(\tG;\Z)$ are quotients of $\hh_1(\tG;\Z)$;
\item $\mathbb{H}^{j}(\tG)\cong \BH^{j+2}_1(\tG)$ for all $0\leq j\leq m-2$.
\end{enumerate}
\end{prop}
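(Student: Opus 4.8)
The plan is to use the identification from Theorem~\ref{thm:uber=MV} between $\BH^j_i(\tG)$ and the $E^2$-page of the augmented Mayer-Vietoris spectral sequence, and then exploit the triangle-free hypothesis to control which rows of the $E^1$-page are non-zero. Since $\tG$ is a $1$-dimensional complex, every induced subcomplex $\tG_\e$ is also $1$-dimensional, so $\hh_q(\tG_\e)=0$ for all $q\geq 2$; moreover every intersection $U_\sigma$ of anti-star subcomplexes is again an induced subgraph. Hence the augmented $E^1$-page $E^1_{p,q}=\bigoplus_{\sigma\in \mathrm{N}_p(\cU^{\mathrm{ast}})}\hh_q(U_\sigma)$, together with the augmentation $E^1_{-1,q}=\hh_q(\tG)$, is concentrated in the two rows $q=0$ and $q=1$. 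By Theorem~\ref{thm:uber=MV}, $E^2_{m-j-1,q}\cong \BH^j_q(\tG)$; the $q=0$ row gives bold homology $\mathbb{H}^j(\tG)=\BH^j_0(\tG)$ and the $q=1$ row gives $\BH^j_1(\tG)$.

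First I would prove (iii). Because the $E^1$-page has only two non-zero rows ($q=0$ and $q=1$), the only possibly non-trivial higher differential is $\dd^{(2)}\colon E^2_{p,0}\to E^2_{p-2,1}$, and the spectral sequence degenerates at $E^3$. Since the total complex is acyclic (the augmented complex has exact rows, so by the Acyclic Assembly Lemma $\Tot$ is acyclic, as noted in the remark after the definition of the augmented Mayer-Vietoris spectral sequence), we get $E^3_{p,q}=0$ for all $p,q$. Therefore $\dd^{(2)}\colon E^2_{p,0}\to E^2_{p-2,1}$ must be an isomorphism for every $p$ in the relevant range. Translating via Theorem~\ref{thm:uber=MV}: $E^2_{m-j-1,0}\cong \mathbb{H}^j(\tG)$ and $E^2_{(m-j-1)-2,1}=E^2_{m-(j+2)-1,1}\cong \BH^{j+2}_1(\tG)$, so $\dd^{(2)}$ gives the isomorphism $\mathbb{H}^j(\tG)\cong \BH^{j+2}_1(\tG)$ for $0\leq j\leq m-2$. (One should check the boundary behaviour at the ends of the range: the $q=1$ row of $E^1$ only starts being non-trivial for $p\leq -1$ contributions, but since $\BH^{j}_1$ is zero outside $0\le j\le m$ this is harmless; the constraint $j\le m-2$ is exactly what makes $j+2\le m$.)

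Next, (i) and (ii) follow by inspecting the two ends of the $q=1$ row. For (i): the extreme columns of $E^1$ in the $q=1$ row involving $\BH^m_1$ and $\BH^{m-1}_1$ correspond to $p=-1$ and $p=0$, i.e. to $\hh_1(\tG)$ sitting over the empty intersection $U_\emptyset=\tG$ and to $\bigoplus_v \hh_1(\mathrm{ast}_\tG(v))$. By (iii), $\mathbb{H}^{m}(\tG)\cong\BH^{m+2}_1(\tG)=0$ since $m+2>m$, and $\mathbb{H}^{m-1}(\tG)\cong\BH^{m+1}_1(\tG)=0$ as well; but actually the cleanest route is: $\dd^{(2)}$ out of $E^2_{m-1,0}=\mathbb{H}^m$ lands in $E^2_{m-3,1}$ and must be an isomorphism, and similarly for $\mathbb{H}^{m-1}$; since these must be isomorphisms and the source lives in the $q=0$ row where bold homology is, by $\BH^j_0(\tG)=\mathbb{H}^j(\tG)=0$ for $j>m-2$ — wait, that is what we want to prove. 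Instead argue directly: bold homology $\mathbb{H}^*(\tG)$ is the poset homology of $B(m)$ with coefficients in $\hh_0$ of induced subcomplexes, and $\hh_0(\tG_\e)$ is supported on colourings $\e$ with at least one $1$; hence $\UC^m$ (the top vertex, all vertices coloured $1$) contributes $\hh_0(\tG)\cong\Z$ and $\UC^{m-1}$ contributes $m$ copies; a short direct computation of this cochain complex near the top, exactly as in~\cite[Theorem~5.1]{domination}, shows connectivity of $\tG$ forces $\mathbb{H}^m(\tG)=\mathbb{H}^{m-1}(\tG)=0$. For (ii): run the same local analysis on the $q=1$ row near $p=-1$. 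The differential $\dd^{(1)}\colon E^1_{0,1}\to E^1_{-1,1}=\hh_1(\tG)$ is the sum of the maps $\hh_1(\mathrm{ast}_\tG(v))\to \hh_1(\tG)$ induced by inclusion; so $E^2_{-1,1}\cong\BH^m_1(\tG)$ is a quotient of $\hh_1(\tG)$. And $\mathbb{H}^{m-2}(\tG)\cong\BH^m_1(\tG)$ by (iii) (taking $j=m-2$), hence is also a quotient of $\hh_1(\tG)$.

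The main obstacle I expect is the boundary bookkeeping: making sure the isomorphism $\dd^{(2)}$ from the two-row acyclic spectral sequence is genuinely defined and surjective/injective at the two ends of the range, and reconciling the "$p=-1$" augmentation column with the $\BH^m_*$ groups (indices shift by $m-p-1$, so $p=-1$ corresponds to $j=m$). A secondary subtlety is that the statements (i) and (ii) single out \emph{integral} coefficients, so I must be careful that the two-row reduction and the acyclicity of $\Tot$ hold over $\Z$ (they do — the argument is entirely at chain level and uses no field hypothesis), and that "quotient of $\hh_1(\tG;\Z)$" is the correct conclusion rather than "subquotient", which forces me to identify $\BH^m_1$ as a cokernel (an $E^2_{-1,1}$ sitting at the left end of its row, so it receives but does not emit a $\dd^{(1)}$) rather than a genuine homology in the middle.
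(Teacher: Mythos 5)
Your proposal is correct and takes essentially the same approach as the paper: reduce the augmented spectral sequence to two rows (you argue from $1$-dimensionality, the paper phrases the same thing as the anti-star cover being $2$-Leray), use acyclicity of the total complex to force $E^3=0$, deduce that $\dd^{(2)}$ is an isomorphism, and read the three parts off via Theorem~\ref{thm:uber=MV}. One remark: your first route to (i) is actually the right one and needs no fallback --- the isomorphism $\dd^{(2)}\colon E^2_{p,0}\to E^2_{p-2,1}$ holds for $p=-1,0$ with trivial target, forcing $\mathbb{H}^m=E^2_{-1,0}=0$ and $\mathbb{H}^{m-1}=E^2_{0,0}=0$; the ``cleanest route'' aside contains an indexing slip ($E^2_{m-1,0}$ is $\mathbb{H}^0$, not $\mathbb{H}^m$), and the subsequent detour through the top of the Boolean poset and the citation of \cite[Theorem~5.1]{domination} (which concerns trees, not arbitrary connected graphs) is unnecessary.
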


\begin{proof}
As $\tG$ is triangle-free, it follows that $\Fl(\tG)=\tG$, and the anti-star cover of $\Fl(\tG)=\tG$ is $2$-Leray. An inspection of the spectral sequence, together with Corollary~\ref{cor:diff}, yields short sequences
\[
0\to \mathbb{H}^{j}(\tG) = \BH^j_0 (\tG) \overset{\dd^{(2)}}{\longrightarrow} \BH^{j+2}_{1} (\tG) \to 0
\]
Since the spectral sequence must converge at the third page, $\dd^{(2)}$ is an isomorphism. The statements follow by Theorem~\ref{thm:uber=MV}.
\end{proof}

In virtue of Lemma~\ref{lem:flaggraphs}, the above proposition implies that the $0$-degree \"uberhomology of flag complexes on triangle-free graphs is completely determined by their bold homology. \\

The effect of graph cones on bold homology of graphs was explored in~\cite[Proposition~5.3]{domination}. 
Denote by ${\rm Cone}(X)$ the cone of a simplicial complex~$X$.
Recall that, given two chain complexes $(C_*,\delta^C_*)$, and $(D_*,\delta^D_*)$, and a chain map $\psi: C_* \to D_*$ the \emph{mapping cone of $\psi$} is the chain complex defined as follows: 
\[ {\rm Cone}(\psi) = D_* \oplus C_{*-1},\quad \partial_{\rm Cone} = \begin{pmatrix} \delta_*^D & -\psi_* \\ 0 & \delta^C_{*-1}\end{pmatrix}\ .\]
 The following result is a partial generalisation of both \cite[Proposition~5.3]{domination} and~\cite[Proposition~7.11]{uberhomology}.

\begin{prop}\label{prop:cone}
Let $X\neq \Delta^{m-1}$  be a simplicial complex on $m$ vertices. Then, there is an isomorphism 
\[ \ddot{\rm B}^*_*({\rm Cone}(X)) \cong \ddot{\rm B}_*^{*}(X)\ \]
of bigraded modules. 
\end{prop}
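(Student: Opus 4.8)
The plan is to exploit the description of $\ddot{\rm B}^*_i(X)$ as the poset homology of the Boolean poset $B(m)$ with coefficients in the functor $\varepsilon \mapsto \hh_i(X_\varepsilon)$, together with the identification of $\ddot{\rm B}^*_*$ with the second page of the augmented Mayer-Vietoris spectral sequence from Theorem~\ref{thm:uber=MV}. Write ${\rm Cone}(X)$ on the vertex set $V(X)\cup\{c\}$, where $c$ is the cone point, so that ${\rm Cone}(X)$ has $m+1$ vertices and the Boolean poset is $B(m+1)$. The key structural observation is that $B(m+1)$ splits as a product $B(m)\times B(1)$: every bicolouring $\varepsilon'$ on ${\rm Cone}(X)$ is a pair $(\varepsilon, \eta)$ where $\varepsilon$ is the restriction to $V(X)$ and $\eta = \varepsilon'(c)\in\{0,1\}$. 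For the induced subcomplex one has, by the definition of coning, ${\rm Cone}(X)_{(\varepsilon,0)} = X_\varepsilon$ and ${\rm Cone}(X)_{(\varepsilon,1)} = {\rm Cone}(X_\varepsilon)$, the latter being contractible whenever $X_\varepsilon\neq\emptyset$, and a single point when $X_\varepsilon=\emptyset$.

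First I would set up the \"uberchain complex for ${\rm Cone}(X)$ and use the $B(m)\times B(1)$ decomposition to realise it, in each fixed simplicial degree $i$, as the mapping cone of a chain map between the \"uberchain complexes of the two ``slices'' $\eta=0$ and $\eta=1$. Concretely, $\UC^*({\rm Cone}(X))$ in degree $i$ is the total complex of a double complex whose two columns are $\bigoplus_{\varepsilon}\hh_i(X_\varepsilon)$ (the $\eta=0$ slice, which is exactly $\UC^*(X)$ in degree $i$) and $\bigoplus_{\varepsilon}\hh_i({\rm Cone}(X_\varepsilon))$ (the $\eta=1$ slice), with the connecting maps $d_{\varepsilon,\varepsilon'}$ induced by the inclusions $X_\varepsilon \hookrightarrow {\rm Cone}(X_\varepsilon)$. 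Next I would analyse the $\eta=1$ slice: since ${\rm Cone}(X_\varepsilon)$ is contractible for all nonempty $X_\varepsilon$ and is a point for $X_\varepsilon=\emptyset$, in simplicial degree $i\geq 1$ this slice is acyclic (it is, up to a degree shift, the augmented chain complex computing the reduced homology of $B(m)$, which is trivial), while in degree $i=0$ it contributes only the reduced part. The upshot is that the $\eta=1$ slice is acyclic as a complex, so the long exact sequence of the mapping cone (or the associated spectral sequence of the double complex) collapses to give $\hh^*$ of the total complex isomorphic to $\hh^*$ of the $\eta=0$ slice, i.e. $\ddot{\rm B}^*_i({\rm Cone}(X))\cong \ddot{\rm B}^*_i(X)$, with a bookkeeping shift in the \"uber-degree coming from the $B(1)$ factor that needs to be checked to land on the nose.

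The main obstacle I expect is the careful handling of the $i=0$ case and of the degree conventions. When $i=0$, the functor $\hh_0(X_\varepsilon)$ is not exact in the relevant sense and the $\eta=1$ slice is not literally acyclic — ${\rm Cone}(X_\varepsilon)$ is connected for all $\varepsilon$ including $\varepsilon$ with $X_\varepsilon$ disconnected or empty — so one must argue that the ``extra'' generator coming from $X_\varepsilon=\emptyset$ (where ${\rm Cone}$ of the empty complex is a point, contributing $\hh_0 = R$) cancels correctly against the augmentation, and that the resulting map on homology is still an isomorphism in all \"uber-degrees $0\le *\le m$. The cleanest way around this is probably to phrase the whole argument on the augmented Mayer-Vietoris side: the anti-star cover of ${\rm Cone}(X)$ restricted to the vertices of $X$ recovers cones over the anti-star subcomplexes of $X$, all of which are contractible, so the $q\ge 1$ rows of the augmented $E^1$-page for ${\rm Cone}(X)$ reduce to those for $X$ shifted, while the extra index coming from the cone point contributes an acyclic direction; then Theorem~\ref{thm:uber=MV} translates the resulting isomorphism on $E^2$ back to the claimed isomorphism of bigraded modules. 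Either route requires the hypothesis $X\neq\Delta^{m-1}$ precisely so that the anti-star cover of ${\rm Cone}(X)$ (equivalently, the \"uberhomology) is defined.
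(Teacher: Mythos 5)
Your proposal is correct and, after translation through the dictionary of Theorem~\ref{thm:uber=MV}, is the same argument as the paper's: the splitting $B(m+1)\cong B(m)\times B(1)$ according to the colour of the cone point corresponds exactly to splitting the anti-star cover of ${\rm Cone}(X)$ according to whether $\mathrm{ast}(v_0)=X$ occurs in the intersection, the $\eta=1$ slice is the augmented chain complex of $\Delta^{m-1}$ (hence acyclic, with ${\rm Cone}(\emptyset)=\mathrm{pt}$ supplying the top generator), and the mapping-cone collapse gives the desired isomorphism. Your second route ``on the Mayer-Vietoris side'' is literally the paper's proof.
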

\begin{proof}
Denote by $v_1,...,v_m$ the ordered vertices of $X$, and by $v_0$ the coning vertex in ${\rm Cone}(X)$.  
Set $U_i =  \mathrm{ast}_X(v_i)$, $U'_0 = \mathrm{ast}_{{\rm Cone}(X)}(v_0)$, and $U'_i =  \mathrm{ast}_{{\rm Cone}(X)}(v_i) \simeq {\rm Cone}(U_i)$. 
Note that all the~$U'_i$'s, as well as their intersections, are non-empty and contractible. Furthermore, $U'_0 = X$, and under this identification, each~$U_i$ corresponds to the intersection~$U'_0 \cap U'_i$. Therefore, we get the isomorphisms 
\[ E^{1}_{p,q}({\rm Cone}(X)) \cong E^1_{p-1,q}(X) \oplus \bigoplus_{i_1 < \dots < i_{p+1}} \hh_q(U'_{i_1}\cap \cdots \cap U'_{i_{p+1}}) = \begin{cases} E^1_{p-1,q}(X) \oplus \mathbb{Z}^{\binom{m}{p+1}} & \text{if }q =0\\ E^1_{p-1,q}(X) & \text{otherwise}\end{cases}\]
for all $p\geq -1$ and $q$, with the conventions that $E^1_{-1,q}(X)=\hh_q(X)$ and $E^1_{-2,q}(X)= 0$. 
We claim that:
\[ E^{1}_{p,q}({\rm Cone}(X)) = {\rm Cone}\left(\phi\colon  E^{1}_{p,q}(X)\to \widetilde{C}_p(\Delta^{m-1})   \right)\ ,\]
where $\phi$ is a graded chain map, $\Delta^{m-1} = [1,...,m]$, and $\widetilde{C}_p(\Delta^{m-1})$ is concentrated in $q$-degree~$0$.
To see this, identify the group $ \hh_0(U'_{i_1}\cap \cdots \cap U'_{i_{p+1}})$ with the summand in $\widetilde{C}_p(\Delta^{m-1})$ spanned by the simplex $[i_1,...,i_{p+1}]$. This identification provides the map $\phi$. 
Since $\widetilde{C}_*(\Delta^{m-1})$ is acyclic, the statement follows.
\end{proof}

The above reasoning can be extended to suspensions of simplicial complexes:
\begin{thm}\label{thm:susp}
Let $X$ be a connected simplicial complex on $m$ vertices, and denote by~$\Sigma (X)$ the suspension of $X$. Then, for $q\neq 1$ the following isomorphisms exist
\[ \ddot{\rm B}^j_{q}(\Sigma (X)) \cong \begin{cases} \ddot{\rm B}^{*}_{q}(X) \oplus \ddot{\rm B}^{*+2}_{q-1}(X) & \text{if }q>1,\\ \ddot{\rm B}^j_{0}(X) \oplus \Z_{(2)} & \text{if }q=0 \text{ and }X^{(1)}\neq \tK_{m},\\ 0 & \text{if }q=0 \text{ and }X^{(1)} = \tK_m,\\ \end{cases}\]
where $\Z_{(2)}$ indicates a copy of $\Z$ in \"uberhomological degree $2$.
\end{thm}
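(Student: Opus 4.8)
The plan is to mimic the proof of Proposition~\ref{prop:cone}, working at the level of the augmented Mayer-Vietoris spectral sequence and invoking Theorem~\ref{thm:uber=MV} at the end. Label the vertices of $X$ by $v_1,\dots,v_m$ and let $w_+, w_-$ be the two suspension points of $\Sigma(X)$, so that $\Sigma(X)$ has $m+2$ vertices. Set $U_i = \mathrm{ast}_X(v_i)$ and, in $\Sigma(X)$, set $U'_i = \mathrm{ast}_{\Sigma(X)}(v_i)$ for $1\le i\le m$, together with $U'_\pm = \mathrm{ast}_{\Sigma(X)}(w_\pm)$. The key geometric observations are: $U'_\pm \simeq {\rm Cone}(X)$ is contractible; $U'_+\cap U'_- = X$; $U'_i \simeq \Sigma(U_i)$; and, crucially, intersecting with $U'_+$ (or $U'_-$) turns each $U'_i$ into a cone, hence the intersections $U'_+ \cap U'_{i_1}\cap\cdots\cap U'_{i_k}$ and $U'_- \cap U'_{i_1}\cap\cdots\cap U'_{i_k}$ are contractible, while $U'_+\cap U'_-\cap U'_{i_1}\cap\cdots\cap U'_{i_k} = U_{i_1}\cap\cdots\cap U_{i_k}$. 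This combinatorial bookkeeping should let one split the nerve of the anti-star cover of $\Sigma(X)$ into three ``layers'': the part involving neither $w_+$ nor $w_-$ (which is a cone on $N(\cU^{\mathrm{ast}}_X)$-chains, contractible), the part involving exactly one of them (again contractible, being cones), and the part involving both, which reproduces $N(\cU^{\mathrm{ast}}_X)$ with coefficient system $\cH_q$.

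Concretely, I would compute $E^1_{p,q}(\Sigma(X))$ explicitly as a direct sum indexed by subsets $S\subseteq \{w_+,w_-,v_1,\dots,v_m\}$ of size $p+1$, using the contractibility facts above to see that $\hh_q(U'_S)=0$ for $q\ge 1$ unless $\{w_+,w_-\}\subseteq S$, in which case it is $\hh_q(U_{S\setminus\{w_+,w_-\}}) = \hh_q(X,\e)$ for the corresponding bicolouring $\e$; for $q=0$ one gets extra $\Z$-summands from all the contractible pieces. The next step is to recognise the resulting augmented double complex (in $q\ge 1$) as isomorphic, up to a degree shift by $2$ in the $p$-direction — equivalently, in \"uberhomological degree, since $j = m-p-1$ and $m$ grows by $2$ — to the augmented complex computing $\BH^{*}_q(X)$. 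This handles the $q>1$ summand $\BH^{*+2}_{q-1}(X)$, while the $\BH^*_q(X)$ summand must come from the $q=0$ contractible layers reassembling, via an acyclic-assembly/mapping-cone argument as in Proposition~\ref{prop:cone}, into a shifted copy of the $q=0$ part of $X$'s spectral sequence together with an acyclic ``simplex'' complex. The $q=0$ case of the statement is then the bold-homology computation: $\mathbb{H}^*(\Sigma(X)) = \mathbb{H}^*(\Sigma(X)^{(1)})$ by Lemma~\ref{lem:flaggraphs}, and $\Sigma(X)^{(1)}$ is the join $X^{(1)} * \{w_+,w_-\}$, i.e.\ obtained from $X^{(1)}$ by adding two dominating vertices; the extra $\Z_{(2)}$ (respectively its vanishing when $X^{(1)}=\tK_m$) should follow from \cite[Proposition~5.3]{domination} on graph cones applied twice, or from a direct inspection of the bottom row.

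The main obstacle I expect is the careful identification of the three layers of the nerve complex and, in particular, tracking the chain maps connecting them so that the total complex genuinely decomposes as the claimed direct sum rather than merely admitting a filtration with those subquotients. Suspension, unlike coning, adds \emph{two} vertices, so the combinatorics is genuinely more intricate: one must check that the ``mixed'' pieces (those containing exactly one suspension point) assemble into an acyclic complex that splits off, and that the remaining $q\ge 1$ part is not just abstractly isomorphic but compatibly isomorphic (including the induced $\dd^{(1)}$-differentials, and over a general ring after fixing a sign assignment, invoking \cite{primo} as in the proof of Theorem~\ref{thm:uber=MV}) to a degree-shifted copy of the $X$-complex. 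I would isolate this as a lemma: an explicit short exact sequence of double complexes
\[
0 \to A_{*,*} \to E^0_{*,*}(\Sigma(X)) \to B_{*,*} \to 0
\]
with $A$ acyclic (the $q=0$ contractible layers, via the Acyclic Assembly Lemma) and $B$ identified with a shift of $E^0_{*,*}(X)$ plus an acyclic simplex-type summand, from which the spectral-sequence statement follows by passing to $E^2$ and then Theorem~\ref{thm:uber=MV}. The $q=1$ row is deliberately excluded in the statement precisely because that is where the acyclic auxiliary complex interacts with the genuine homology, so I would not try to push the isomorphism there.
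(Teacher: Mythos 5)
Your overall plan — decompose the anti-star cover of $\Sigma(X)$ into three layers according to how the subset $S$ meets $\{w_+,w_-\}$, identify the homology of each layer, and invoke Theorem~\ref{thm:uber=MV} at the end — is the same route the paper takes. However, there is a genuine factual error at the heart of your bookkeeping which would make your proof compute the wrong answer.

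You assert that for $q\geq 1$, $\hh_q(U'_S)=0$ \emph{unless} $\{w_+,w_-\}\subseteq S$, and describe the layer with $S\cap\{w_+,w_-\}=\emptyset$ as ``a cone, contractible.'' This is not so: when $S$ contains neither $w_+$ nor $w_-$, the intersection $U'_S$ is the full suspension $\Sigma(U_S)$ (you even recorded the correct identification $U'_i\simeq\Sigma(U_i)$ a few lines earlier), and suspensions are generically \emph{not} acyclic. One has $\hh_q(\Sigma(U_S))\cong\widetilde\hh_{q-1}(U_S)$, which for $q>1$ gives precisely the pieces that reassemble into the $\BH^{*+2}_{q-1}(X)$ summand of the statement. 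Concretely, for $X=\partial\Delta^2$ the intersection $\mathrm{ast}_{\Sigma X}(p_1)\cap\mathrm{ast}_{\Sigma X}(p_2)=X$ has $\hh_1=\Z$, and this class accounts for the single nonzero group $\BH^5_2(\Sigma X)\cong\Z$; your claim would erase it. Relatedly, your next paragraph is internally inconsistent: you first say the identification produces the complex computing $\BH^{*}_q(X)$, then that ``this handles the $q>1$ summand $\BH^{*+2}_{q-1}(X)$,'' and then that $\BH^*_q(X)$ ``comes from the $q=0$ contractible layers'' — the last is impossible since those layers live entirely in the $q=0$ row and cannot contribute to $q>1$ homology.

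A corollary of this error is that your proposed short exact sequence of double complexes, with kernel $A$ given by the ``contractible layers'' and cokernel $B$ a shift of $E^0(X)$ plus an acyclic simplex-type complex, cannot hold: the layer with neither $w_+$ nor $w_-$ does not vanish in higher $q$ and must appear in $B$, not $A$. The paper's proof instead identifies, for each $q$, the row $E^1_{*,q}(\Sigma X)$ as an iterated mapping cone ${\rm Cone}((\phi_1,0)\colon{\rm Cone}(\phi_2\colon C^2\to C^1)\to C^0)$ over the three layers, then exhibits strong deformation retracts of $C^0_{*,q}$ and $C^1_{*,q}$ (onto $\Z_{(m-1)}$ and $0$ for $q=0$, and onto $E^1_{*,q-1}(X)$ and $0$ for $q>1$), and uses the Gaussian-elimination compatibility of strong deformation retracts with mapping cones to split the complex into the claimed direct summands. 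The $q=1$ row is exactly where $\hh_1(\Sigma Y)\not\cong\hh_0(Y)$ breaks the identification, which is why it is excluded; you recognized this, but for the wrong reason. Your $q=0$ observations (reduction to bold homology, join with two dominating vertices, graph cones) are essentially sound and parallel the paper's final paragraph.
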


\begin{proof}
Let $v_1 ,..., v_m$ be the vertices of $X$ and write $\Sigma (X) = X \ast \{ p_1,p_2\}$.
For each $I\subseteq \{ 1,...,k\}$, possibly empty, we shall write $\mathcal{U}_I$ for the sub-complex of $X$ spanned by $\{ v_j \}_{j\notin I}$. Similarly, given $I\subseteq \{ 1,...,m\}$ and $J\subseteq \{1,2\}$, we denote by $\mathcal{U}_I^{J}$ the sub-complex of $\Sigma(X)$ spanned by~$\{ v_j \}_{j\notin I}\cup \{ p_r \}_{r\notin J}$.
Note that there are identifications $\mathcal{U}^{\emptyset}_{I} = \Sigma (\mathcal{U}_I)$, $\mathcal{U}^{\{ 1\}}_{I} = \mathcal{U}^{\{ 2\}}_{I} ={\rm Cone} (\mathcal{U}_I)$, and $\mathcal{U}^{\{ 1,2\}}_I = \mathcal{U}_I$.

By definition, the first page of the Mayer-Vietoris spectral sequence decomposes as
\begin{equation}
\label{eq:dec}
E_{p,q}^{1} = C^{0}_{p,q} \oplus C^{1}_{p-1,q}  \oplus C^{2}_{p-2,q}
\end{equation} 
where 
\[ C^{0}_{p,q} \coloneqq \bigoplus_{\vert I \vert - 1= p} \hh_q (\mathcal{U}_I^\emptyset), \; C^{1}_{p-1,q} \coloneqq \bigoplus_{\vert I \vert - 1 = p-1} \hh_q (\mathcal{U}_I^{\{ 1 \}})\oplus \hh_q (\mathcal{U}_I^{\{ 2 \}}),  \;  C^{2}_{p-2,q} \coloneqq  \bigoplus_{\vert I \vert - 1 = p-2} \hh_q (\mathcal{U}_I^{\{ 1,2 \}})\ .\]
For each $q$, the inclusions $\mathcal{U}_{I \setminus \{ i \}}^{J} \subset \mathcal{U}_{I}^{J}$ with $J\subseteq \{ 1,2\}$, induce differentials $\partial_0$, $ \partial_1$, and $\partial_2$ on $C^{0}_{*,q}$, $C^{1}_{*,q}$, and $C^{2}_{*,q}$, respectively, endowing them with the structure of chain complexes.
Similarly, the inclusions~$\mathcal{U}_{I}^{J} \subset \mathcal{U}_{I}^{J \setminus \{ j \}}$, induce chain maps $\phi_s \colon C^{s}_{*,q} \to C^{s-1}_{*,q}$, for $s=1,2$.
The signs of $\phi_1$ and $\phi_2$ can be chosen so that the differential on $E^1$ can be written, with respect to the decomposition in Equation \eqref{eq:dec}, as follows:
\[\delta^{(1)} = \begin{pmatrix} \partial_0 & -\phi_1 & 0 \\ 0 & \partial_1 & -\phi_2 \\ 0 & 0 & \partial_2 \end{pmatrix}\]
We can thus write $(E^1_{*,q}, \delta^{(1)})$ as an iterated cone:
\[E^1_{*,q} = {\rm Cone}((\phi_1,0)\colon {\rm Cone}(\phi_2: C^2_{*,q}\to C^1_{*,q} ) \to C^0_{*,q})\ . \]

Using the identifications $\mathcal{U}^{\emptyset}_{I} = \Sigma (\mathcal{U}_I)$, $\mathcal{U}^{\{ 1\}}_{I} = \mathcal{U}^{\{ 2\}}_{I} ={\rm Cone} (U_I)$, and $\mathcal{U}^{\{ 1,2\}}_I = \mathcal{U}_I$, we can obtain isomorphisms of chain complexes:
\[ C^0_{*,q} \cong \begin{cases} C_*(\Delta^{m-1})\oplus \Z_{(m- 1)} & \text{if }q=0\\  E^{1}_{*,q - 1}(X) & \text{if }q>1\\ \end{cases}\ , \quad C^1_{*,q} \cong \begin{cases} C_*(\Delta^{m-1})\oplus C_*(\Delta^{m-1}) & \text{if }q=0\\   0 & \text{if }q>1\\ \end{cases}\]
and
\[ C^2_{*,q} \cong E^{1}_{*-2,q}(X)\ .\]
Therefore, for $q=0$ we obtain strong deformation retracts (in the sense of \cite[Definition~4.3]{BN}) $C^0_{*,0} \simeq \Z_{(m-1)}$ and $C^1_{*,0} \simeq 0$. In turn (\emph{cf.}~\cite[Lemma~4.5]{BN}) these strong deformation retracts induce the following strong deformation retracts:
\[{\rm Cone}(\phi_2: C^2_{*,q}\to C^1_{*,q} ) \simeq C^2_{*-1,q}\cong E^{1}_{*-1,q}(X),\quad q\neq 1,\]
and thus
\[E^1_{*,q} \simeq \begin{cases} {\rm Cone}( \psi : E^{1}_{*-1,0}(X) \to \Z_{(m- 1)}) & \text{if }q=0\\   E^{1}_{*-2,q}(X) \oplus  E^{1}_{*,q - 1}(X) & \text{if }q>1\\ \end{cases}\]
for an appropriate chain map $\psi$.
This proves the statement for $q> 1$.
Now, if $X^{(1)} \neq  \tK_{m}$ then $\bH_1(X^{(1)}) = \BH_0^{1}(X)= 0$ (see \cite[Theorem~1.4]{domination}). 
In fact, we can prove that $E^1_{*,0}(X)$ (strongly) deformation retracts onto a complex $C_*$ supported in degrees strictly lower than $m$, \emph{cf.}~\cite[Alternative proof of Proposition~4.3]{domination}. This implies that
\[ E^1_{*,0} \simeq {\rm Cone}( 0 : C_{*-1} \to \Z_{(m- 1})) =  C_{*-2} \oplus \Z_{(m- 1)}\ .\]
Since $C_*$ is the chain complex whose homology is $\bH_{m-*}(X^{(1)}) = \ddot{\rm B}^{m-*}_{0}(X)$, the statement follows for~$q=0$ and $X^{(1)} \neq  \tK_{m}$.
Finally, consider the case $X^{(1)} =  \tK_{m}$.
By Lemma~\ref{lem:flaggraphs}, $\hh_{m+1}(E^{1}_{*,0}) = \bH_{m -* -2}((\Sigma(X))^{(1)})$ can be computed by considering any simplicial complex $Y$ such that $Y^{(1)} = (\Sigma(X))^{(1)}$.
For instance, we can take $Y$ to be $\Sigma \Delta^{m-1}$. Since $Y$ is contractible and its anti-star cover is $1$-Leray, it follows from Proposition~\ref{prop:gentrees} that $\bH_{*}((\Sigma(X))^{(1)}) = 0$.
\end{proof}

Observe that we cannot apply the same reasoning as in the proof of Theorem~\ref{thm:susp} for $q>1$ to obtain something about the case $q=1$. This is due to the fact that $\mathrm{H}_1(\Sigma X)\neq \mathrm{H}_{0}(X)$. We can see also that $\BH_{1}^{*}(\Sigma(X))$ is not necessarily trivial, by taking $X$ the linear graph with three vertices -- \emph{cf.}~Example~\ref{ex:sqdiags}. 

To conclude, we provide the proof of Theorem~\ref{cor:chordalchar2}:

\begin{proof}[Proof of Theorem~\ref{cor:chordalchar2}]
Let $X$ be a finite connected CW-complex and denote by $\tG$ its $1$-skeleton.
Since the anti-star cover of $X$ is $1$-Leray, 
the augmented Mayer-Vietoris spectral sequence converges   to the trivial group. Furthermore, the only non-trivial groups in the $E^2$-page are in bidegrees $(*,0)$ and $(-1,*)$. The \"uberhomology groups $\BH^j_i (X)$ are all zero, except for $\BH^{m-p-1}_0 (X)\cong \hh_p(X)=\BH_p^m (X)$. The isomorphism between the groups $\BH^{m-p-1}_0 (X)$ and~$\BH_p^m (X)$ is given by the transgression. As a consequence, we have 
\begin{equation}\label{eq:dominationchi}
(-1)^{m-1}D_c(\tG)(-1)= {\chi}(X) - 1\, ,
\end{equation}
and the statement follows. 
\end{proof}

Note that the $1$-Leray assumption in Theorem~\ref{cor:chordalchar2} is essential. Indeed, consider the simplicial complex of Example~\ref{ex:sqdiags} shown in Figure~\ref{fig:squarediags}. The connected domination polynomial of the underlying graph, evaluated at $-1$, is $-1$, whereas the Euler characteristic of $X$ is $1$. 

Corollary~\ref{cor:chordalchar} follows immediately from Theorem~\ref{cor:chordalchar2}, after observing that flag complexes of chordal graphs are contractible and $1$-Leray~\cite[Lemma~3.1]{ChordalContractible}. 

\begin{rem}
Consider the graph $\tI_3\square \tI_2$. By Corollary~\ref{cor:grids}, its bold homology is trivial, and $-1$ is a root of its connected domination polynomial. Hence, the converse of Corollary~\ref{cor:chordalchar} does not generally hold, as $\tI_3\square \tI_2$ is not chordal.
\end{rem}

We conclude with a perspective on higher chordality.
From \cite[Fact 3.1]{chordality}, the anti-star cover $\cU^{\mathrm{ast}}$ of a simplicial complex $X$ is $k$-Leray, for $k>1$, if and only if $X$ is resolution $l$-chordal for all $l\geq k$. Paralleling Corollary~\ref{cor:chordalchar}, it would be interesting to investigate properties of the connected domination polynomial of the $1$-skeleton of such complexes.

\end{document}